\numberwithin{equation}{section}
\theoremstyle{plain}
\newtheorem{theorem}{Theorem}[section]
\newtheorem{proposition}[theorem]{Proposition}
\newtheorem{lemma}[theorem]{Lemma}
\newtheorem{corollary}[theorem]{Corollary}
\newtheorem{claim}{Claim}
  \theoremstyle{remark}
  \theoremstyle{definition}
\newcommand\R{\text{I\!R}}
\newcommand\N{\text{I\!N}}
\newcommand\ve{\varepsilon}
\newcommand\e{\epsilon}
\newcommand\de{\delta}
\newcommand\be{\beta}
\newcommand\al{\alpha}
\newcommand{\Om}{\Omega}
\newcommand{\fr}{\partial}
\newcommand{\grad}{\nabla}
\newcommand{\ml}{\mathcal}
\newcommand{\sm}{\setminus}
\newcommand{\la}{\lambda}
\newcommand{\dem}{\bf Proof:}
\newcommand\lap{\Delta}
\newcommand\ti{\tilde}
\newcommand{\lf}{\left}
\newcommand{\rg}{\right}
\newcommand\ds{\displaystyle}
\renewcommand\({\left(}
\renewcommand\){\right)}
\DeclareMathAlphabet{\mathpzc}{OT1}{pcz}{m}{it}
\begin{document}   \title[sinh-Poisson type equation on pierced domains]{A note on a sinh-Poisson type equation with variable\\ intensities on pierced domains}

\author[P. Figueroa]{Pablo Figueroa}
\address{Pablo Figueroa 
\newline \indent Universidad Austral de Chile
\newline \indent Facultad Ciencias
\newline \indent Instituto de Ciencias Físicas y Matemáticas
\newline \indent Campus Isla Teja, Valdivia, Chile }
\email{pablofs78@gmail.com }

\date{\today}
\subjclass[2010]{35B44; 35J25; 35J60}

\keywords{sinh-Poisson equation, pierced domain, blowing-up solutions}

\maketitle

\begin{abstract}
\noindent 
We consider a sinh-Poisson type equation with variable intensities and Dirichlet boundary condition on a pierced domain
\begin{equation*}
\left\{ \begin{array}{ll} 
\Delta u +\rho\lf(V_1(x)e^{u}- V_2(x)e^{-\tau u}\rg)=0 &\text{in } \Om_\e:=\Om\sm \ds \bigcup_{i=1}^m \overline{B(\xi_i,\e_i)}\\
u=0&\text{on }\fr\Om_\e,
\end{array}\right.
\end{equation*}
where $\rho>0$, $V_1,V_2>0$ are smooth potentials in $\Om$, $\tau>0$, $\Om$ is a smooth bounded domain in $\R^2$ and  $B(\xi_i,\e_i)$ is a ball centered at $\xi_i\in \Om$ with radius $\e_i>0$, $i=1,\dots,m$. When $\rho>0$ is small enough and $m_1\in \{1,\dots,m-1\}$, there exist radii $\e=(\epsilon_1,\dots,\epsilon_m)$ small enough such that the problem has a solution  which blows-up positively at the points $\xi_1,\dots,\xi_{m_1}$ and negatively  at the points $\xi_{m_1+1},\dots,\xi_{m}$ as $\rho\to 0$. The result remains true in cases $m_1=0$ with $V_1\equiv 0$ and $m_1=m$ with $V_2\equiv 0$, which are Liouville type equations.
\end{abstract}

\tableofcontents

\section{Introduction}

Let  $\Om\subset \R^2$ be a smooth bounded domain. Given $\e:=(\epsilon_1,\dots,\epsilon_m)$ and $m$ different points $\xi_1,\dots,\xi_m\in\Om$, define $\Om_\e:=\Om \setminus \cup_{i=1}^m \overline{B(\xi_i,\e_i)},$ a pierced domain, where
  $B(\xi_i,\e_i)$ is  a ball centered at $\xi_i$ with radius $\e_i >0$. Inspired by recent results in \cite{EFP}, we are interested in this paper in building solutions to a sinh-Poisson type equation with variable intensities and Dirichlet boundary condition on pierced domains:
\begin{equation}\label{lepd}
\left\{ \begin{array}{ll} 
\Delta u +\rho\lf(V_1(x)e^{u}-\nu V_2(x) e^{-\tau u}\rg)=0 &\text{in }\Om_\e\\
u=0&\text{on }\fr\Om_\e,
\end{array}\right.
\end{equation}
where $\rho>0$ is small, $V_1,V_2>0$ are smooth potentials in $\Om$, $\nu\ge0$ and $\tau>0$. This equation and its variants have attracted a lot of attention in recent years due to its relevance in the statistical mechanics description of 2D-turbulence, as initiated by Onsager \cite{o}. Precisely, in this context Caglioti, Lions, Marchioro, Pulvirenti \cite{clmp} and Sawada, Suzuki \cite{ss} derive the following equation:
\begin{equation}\label{p1}
\left\{\begin{array}{ll}
\ds -\Delta u=\lambda \int\limits_{[-1,1]}{\alpha e^{\alpha u}\over \int\limits_\Omega e^{\alpha u}dx}d \mathcal P(\alpha)& \hbox{in}\ \Omega \\
 u=0 & \hbox{on}\ \partial\Omega,
\end{array}\right.
\end{equation}
where $\Omega$ is a bounded domain in $\mathbb R^2,$ $u$ is the stream function of the flow, $\lambda>0$ is a constant related to the inverse temperature and  $\mathcal P$ is a Borel probability measure in $[-1,1]$ describing the point-vortex intensities distribution. We observe that \eqref{p1} is obtained under a \emph{deterministic} assumption on the distribution of the vortex circulations. 


\medskip \noindent On the other hand, on a bounded domain $\Om\subset\mathbb R^2$ a similar mean field equation to \eqref{p1} is derived by Neri in \cite{N}:
\begin{equation}\label{nerieq}
\left\{\begin{array}{ll}
\ds -\Delta u=\lambda \int\limits_{[-1,1]}{\alpha e^{\alpha u}d\mathcal P(\al) \over \int\!\int_{\Omega\times[-1,1]} e^{\alpha u} d\mathcal P(\alpha)dx}& \hbox{in}\ \Omega \\
 u=0 & \hbox{on}\ \partial\Omega,
\end{array}\right.
\end{equation}
under the \emph{stochastic} assumption that the point vortex circulations are independent identically distributed random variables with probability distribution $\mathcal P$. For Neri's model, several blow-up and existence results have been obtained by Ricciardi and Zecca in \cite{rz2016}. Moreover, the same authors show some common properties between such deterministic and stochastic models in \cite{rz2012}.

\medskip \noindent Equation \eqref{p1} (and also \eqref{nerieq}) includes several well-known problems depending on a suitable choice of $\ml P$. For instance, if $\mathcal P=\delta_1$ is concentrated at $1$, then \eqref{p1} corresponds to the classical mean field equation
\begin{equation}\label{p2}
\left\{\begin{array}{ll} 
\ds -\Delta u=\lambda  { e^{ u}\over \int\limits_\Omega e^{  u}dx}& \hbox{in}\ \Omega\\
 u=0& \hbox{on}\ \partial\Omega,
 \end{array}\right.
 \end{equation}
which has been widely studied in the last decades as shown in \cite{l}.  
When $\mathcal P=\sigma \delta_{1}+(1-\sigma)\delta_{-\tau }$ with $\tau \in[-1,1]$ and $\sigma\in[0,1]$, equation \eqref{p1} becomes
\begin{equation}\label{p21}
\left\{\begin{array}{ll} 
\ds -\Delta u=\lambda \left( \sigma{ e^{ u}\over \int\limits_\Omega e^{  u}dx} -(1-\sigma)\tau { e^{ -\tau  u}\over \int\limits_\Omega e^{ -\tau   u}dx}
\right)& \hbox{in}\ \Omega\\
 u=0& \hbox{on}\ \partial\Omega.
 \end{array}\right.
 \end{equation}
Notice that solutions of \eqref{p2} are critical points of the functional
$$J_\la(u)={1\over 2}\int_\Om |\nabla u|^2-\la \log\lf(\int_\Om e^{u}\rg),\quad u\in H_0^1(\Om),$$
which can be found as minimizers of $J_\la$ if $\la<8\pi$, by using Moser-Trudinger's inequality. In the supercritical regime $\la\ge 8\pi$, the situation becomes subtler since the existence of solutions could depend on the topology and the geometry of the domain. In \cite{CL1,CL2}, Chen and Lin proved that \eqref{p2} has a solution when $\la\notin 8\pi\N$ and $\Om$ is not simply connected using a degree argument. On Riemann surfaces the degree argument in \cite{CL1,CL2} is still available and has received a variational counterpart in \cite{Dja,Mal} by means of  improved forms of the Moser-Trudinger inequality. When $\la=8\pi$ problem \eqref{p2} is solvable on a long and thin rectangle, as showed by Caglioti et al. \cite{clmp1}, but not on a ball. Bartolucci and Lin \cite{BL} proved that \eqref{p2} has a solution for $\la = 8\pi $ when the Robin function of $\Om$ has more than one maximum point.

\medskip \noindent 
Setting $\lambda_1=\lambda\sigma$, $\lambda_2=\lambda(1-\sigma)$ and $V_1=V_2=1$ problem \eqref{p21} can be rewritten as
\begin{equation}\label{p22}
\left\{\begin{array}{ll} 
\ds -\Delta u=\lambda_1  { V_1 e^{ u}\over \int\limits_\Omega V_1 e^{  u}dx} -\lambda_2\tau { V_2 e^{ -\tau  u}\over \int\limits_\Omega V_2 e^{- \tau   u}dx}& \hbox{in}\ \Omega\\
  u=0& \hbox{on}\ \partial\Omega.
  \end{array}\right.
  \end{equation}
If $\tau =1$ and $V_1=V_2\equiv 1$ problem \eqref{p22} reduces to mean field equation of the equilibrium turbulence, see \cite{bjmr,j,jwy2,os1,r} or its related sinh-Poisson version, see \cite{BaPi,BaPiWe,GP,jwy1,jwyz}, which have received a considerable interest in recent years.

\medskip \noindent To the extent of our knowledge, there are by now just few results in a  more general situation. Pistoia and Ricciardi built in \cite{pr1} sequences of blowing-up solutions   to \eqref{p22} when  $\tau  >0$ and $\lambda_1,\lambda_2\tau^2$ are close to $8\pi$, while in \cite{pr2} the same authors built an arbitrary large number of  sign-changing blowing-up solutions to \eqref{p22} when $\tau  >0$ and $\lambda_1,\lambda_2\tau^2$ are close to suitable (not necessarily integer) multiples of $8\pi.$  Ricciardi and Takahashi in \cite{rt}  provided a complete blow-up picture for solution sequences of \eqref{p22}   and successively  in \cite{rtzz} Ricciardi et al. constructed min-max solutions  when $\lambda_1 \to 8\pi^+$ and   $\lambda_2 \to 0$ on a multiply connected domain (in this case the nonlinearity $e^{-\tau  u}$ 
  may  be treated as a lower-order term with respect to the main term $e^u$). A blow-up analysis and some existence results are obtained when $\tau>0$ in a compact Riemann surface in \cite{j2,rz}.

\medskip
\medskip \noindent A matter of interest to us is whether do there exist solutions to \eqref{lepd} for small values of $\rho$ or \eqref{p22} for general values of the parameters $\lambda_1,\lambda_2>0$ on multiply connected domain $\Omega$. Ould-Ahmedou and Pistoia in \cite{op} proved that on a pierced domain $\Omega_\epsilon:=\Omega\setminus \overline{B(\xi_0,\epsilon)}$, $\xi_0\in\Omega$, there exists a solution to the classical mean field equation \eqref{p2} which blows-up at $\xi_0$ as $\epsilon \to 0$ for any $\lambda>8\pi$ (extra symmetric conditions are required when $\la \in 8\pi \mathbb{N}$). Recently, in \cite{EFP} the authors studied the mean field equation with variable intensities on pierced domains
\begin{equation}\label{mfevi}
\left\{ \begin{array}{ll} 
-\lap u=\la_1\dfrac{V_1 e^{u}}{ \int_{\Om_{\boldsymbol\epsilon}} V_1  e^{u} dx } - \la_2\tau \dfrac{ V_2 e^{-\tau u}}{ \int_{\Om_{\boldsymbol\epsilon}}V_2 e^{ - \tau u} dx}&\text{in $\Om_{\boldsymbol\epsilon}$}\\
\ \ u=0 &\text{on $\fr \Om_{\boldsymbol\epsilon}$},
\end{array} \right.
\end{equation}
in the super-critical regime $\lambda_1>8\pi m_1$ and $\lambda_2 \tau^2>8\pi (m-m_1)$ with $m_1 \in \{0,1,\dots,m\}$. This equation is related, but not equivalent, to problem \eqref{lepd} by using the change
$$\rho={\la_1\over \int_{\Om_\e} V_1 e^{u} } \qquad \text{ and }\qquad \rho\nu ={\la_2\tau \over \int_{\Om_\e} V_2 e^{-\tau u} }.$$
More precisely, the authors constructed solutions to \eqref{mfevi} $u\e$ in $\Om_\e$ blowing-up positively and negatively at $\xi_1,\dots,\xi_{m_1}$ and $\xi_{m_1+1},\dots,\xi_m$, respectively, as $\epsilon_1,\dots,\epsilon_m \to 0$ under the assumption
$$\la_1=4\pi(\al_1+\dots+\al_{m_1}), \quad \la_2 \tau^2 = 4\pi(\al_{m_1+1}+\dots+\al_{m }),\
m_1 \in \{0,1,\dots,m\},\ \al_i> 2,\ \al_i\not\in 2\mathbb N.$$ 
Nevertheless, the result in \cite{EFP} may not tell us whether \eqref{lepd} has solutions with $m_1$ positive bubbles and $m-m_1$ negative bubbles for \emph{all} small $\rho>0$. Therefore, we perform directly to problem \eqref{lepd} a similar procedure.
Our main result reads as follows.
\begin{theorem}\label{main}
Let $m$ be a positive integer and $m_1\in \{0,\dots, m\}$. Then, for all $\rho>0$ small enough there are radii $\e(\rho)=\lf(\epsilon_1(\rho),\dots,\epsilon_m(\rho)\rg)$ small enough such that the problem \eqref{lepd} has a solution $u_\rho$ in $\Om_\e$ blowing-up positively at $\xi_1,\dots,\xi_{m_1}$ and negatively at $\xi_{m_1+1},\dots,\xi_m$ as $\rho$ goes to zero.
\end{theorem}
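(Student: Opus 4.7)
I would follow a Lyapunov--Schmidt reduction tailored to the pierced-domain setting, as in \cite{EFP,op}. Let $U_{\delta,\xi}(x)=\log\frac{8\delta^2}{(\delta^2+|x-\xi|^2)^2}$ be the standard Liouville bubble and $PU_{\delta,\xi}$ its projection onto $H_0^1(\Om_\e)$; take as approximate solution
$$W \;=\; \sum_{i=1}^{m_1} PU_{\delta_i,\xi_i}\;-\;\tau^{-1}\!\!\sum_{i=m_1+1}^{m} PU_{\delta_i,\xi_i},$$
with the concentration parameters $\delta_i=\delta_i(\rho)$ fixed so as to match the nonlinearity to leading order: $\delta_i^2$ should be an explicit multiple of $\rho$, the constant involving $V_1(\xi_i)$ (respectively $\nu V_2(\xi_i)$ and $\tau$) and the Robin-type function of $\Om_\e$ at $\xi_i$. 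A careful expansion of $\Delta W+\rho(V_1 e^W-\nu V_2 e^{-\tau W})$ would then bound the error in a suitable weighted $L^\infty$ or $L^p$-norm, both near each bubble and on the annulus just outside each $\fr B(\xi_i,\epsilon_i)$.

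\textbf{Linear theory and contraction.}
Writing $u=W+\phi$, the linear part of the equation for $\phi$ is, near each $\xi_i$, a perturbation of the Liouville linearization $\Delta+e^{U_{\delta_i,\xi_i}}$ whose bounded kernel on $\R^2$ is spanned by the dilation $\fr_\delta U_{\delta_i,\xi_i}$ and by the two translations $\fr_{\xi_k}U_{\delta_i,\xi_i}$. Since the $\xi_i$ are \emph{frozen} hole centers, only the $m$ dilation modes $Z_i$ need to be projected out. One then proves an a priori estimate for the projected linearized operator in a weighted space, uniformly in small $\rho$ and $\e$, and applies a contraction mapping to get a correction $\phi=\phi(\rho,\e)$ with $\|\phi\|=o(1)$ as $\rho\to 0$, solving the projected nonlinear problem.

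\textbf{Reduced problem.}
It remains to kill the $m$ Lagrange multipliers $c_i(\rho,\e)$ corresponding to the $Z_i$'s, since they vanish iff $u=W+\phi$ is a true solution of \eqref{lepd}. Expanding each $c_i$ asymptotically as $\rho\to 0$ (with $\e$ in a suitable regime), the dominant term depends on $\epsilon_i$ through the harmonic correction of $PU_{\delta_i,\xi_i}$ on the punctured neighborhood of $\xi_i$, which carries a factor of type $1/\log(\epsilon_i/\delta_i)$. After isolating this dominant piece the system decouples, to leading order, into $m$ scalar equations, the $i$-th one determining $\epsilon_i$ as a smooth function of $\rho$ via the implicit function theorem (or, failing clean monotonicity, via a Brouwer-degree argument), which yields $\e(\rho)\to 0$ with the announced properties.

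\textbf{Main difficulty.}
The delicate step is the reduction: one must extract sharp asymptotics of $c_i(\rho,\e)$ showing, first, that the leading term is genuinely non-degenerate in $\epsilon_i$ so that the implicit function theorem applies, and second, that the cross-terms mediated by $G_{\Om_\e}$ (whose own dependence on $\e$ is singular as $\e\to 0$) and by the coupling between positive and negative bubbles are of strictly lower order. Tracking these Green's-function corrections uniformly in $\rho$ and $\e$ is the main technical burden, since $\e$ is the only remaining free parameter once $\delta_i$ has been tied to $\rho$.
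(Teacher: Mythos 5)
Your overall framework — projected Liouville bubbles plus a perturbation $\phi$ — is the right one, but there is a genuine gap in the ansatz: you use the standard $8\pi$-mass Liouville bubble $U_{\delta,\xi}=\log\frac{8\delta^2}{(\delta^2+|x-\xi|^2)^2}$, a solution of $\Delta U+e^{U}=0$, whereas the pierced domain forces a \emph{singular} bubble. The Dirichlet condition on $\partial B(\xi_i,\epsilon_i)$ makes the harmonic correction $P_\e U_{\delta,\xi_i}-U_{\delta,\xi_i}$ carry a genuinely logarithmic part $\sim b\log|x-\xi_i|$ on the annulus $A_i$; after exponentiating, the nonlinearity near $\xi_i$ picks up a factor $|x-\xi_i|^{b}$, so the bubble must solve the singular Liouville equation $\Delta w+|x-\xi_i|^{\alpha-2}e^{w}=0$ with $\alpha=b+2>2$. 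This is exactly $w_i=\log\frac{2\alpha_i^2\delta_i^{\alpha_i}}{(\delta_i^{\alpha_i}+|x-\xi_i|^{\alpha_i})^2}$ in the paper, with $\alpha_i>2$, $\alpha_i\notin 2\mathbb{N}$, and the scaling that closes the construction is $\delta_i^{\alpha_i}\sim\rho$ together with $\epsilon_i^{(\alpha_i-2)/2}\sim\rho$ (see \eqref{choice1}); with $\alpha_i=2$ the second relation degenerates, and your proposed matching of $\delta_i^2$ to $\rho$ through the Robin-type quantity of $\Om_\e$ fails since that quantity itself diverges like $\log\epsilon_i$ as the hole shrinks.

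Beyond the profile, the paper does \emph{not} perform a Lyapunov--Schmidt reduction. Once $\delta_i$ and $\epsilon_i$ are both pinned down as explicit functions of $\rho$ via \eqref{choice1}--\eqref{choiced}, Proposition \ref{elle} shows that the full, unprojected linear operator $L=\Delta+W$ is invertible on $H_0^1(\Om_\e)$ with only a $|\log\rho|$ loss, and the solution then follows from a plain contraction (Proposition \ref{p3}). No dilation mode needs to be removed: Claim \ref{claim3} proves that the coefficient $a_j$ of the would-be kernel element $Y_{0j}$ vanishes, i.e.\ the Dirichlet condition on $\partial B(\xi_j,\epsilon_j)$, with $\epsilon_j$ chosen as above, already destroys the approximate dilation degeneracy. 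In effect, the explicit choice of $\epsilon_i$ encoded in \eqref{choice0.1} and Lemma \ref{lem909} solves your reduced equation in closed form, so the implicit-function-theorem step is unnecessary. Your reduction scheme might close after switching to the singular profile, but note also that ``the $\xi_i$ are frozen'' does not by itself dispense with the translation kernel: for the standard bubble the functions $\partial_{\xi}U_{\delta,\xi}$ genuinely lie in the bounded kernel of the limit linearization and, with the $\xi_i$ fixed, you would have no free parameters to absorb those two extra constraints per point; for $\alpha_i\notin 2\mathbb{N}$ the corresponding $Y_{1i},Y_{2i}$ are not admissible in $H_{\alpha_i}(\mathbb{R}^2)$, which is yet another reason the singular ansatz is the correct one here.
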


\medskip \noindent In Theorem \ref{main} we intend that $m_1=m$ if $\nu = 0$ (or $V_2\equiv 0$) and $m_1=0$ if $V_1\equiv 0$. Thus, \eqref{lepd} becomes a Liouville type equation. Let us stress that $\Om_\e$ is an example of a non-simply connected domain. Without loss of generality, we shall assume in the rest of the paper that $\nu=1$, since we can replace $\nu V_2$ by $V_2$. However, we need the presence of $\nu$ when we compare \eqref{lepd} with equation \eqref{mfevi}. 

\medskip \noindent Finally, we point out some comments about the proof of the theorem. Following the main ideas presented in \cite{EFP}, we find a solution $u_\rho$ using a perturbative approach, precisely, we look for a solution of \eqref{lepd} as
\begin{equation}\label{ansatz}
u_\rho=U +\phi,
\end{equation}
where $U$ is a suitable ansatz built using the projection operator $P_\e$ onto $H^1_0(\Omega_\e)$(see \eqref{ePu}) and $U$ is defined as follows
$$U= \sum_{k=1}^{m_1}P_\e w_k-\frac1\tau\sum_{k=m_1+1}^{m} P_\e w_k\qquad \text{ with }\qquad
w_i(x)=\log\frac{2\alpha^2_i\delta_i^{\alpha_i}}{(\delta_i^{\alpha_i}+|x -\xi_i|^{\alpha_i})^2},$$
where $\de_i>0$, $i=1,\dots,m$ and $\al_i$'s are real parameters satisfying $\al_i> 2$ with $\al_i\not\in 2\mathbb N$ for all $i=1,\dots,m$; and $\phi\in H_0^1(\Om_\e)$ is a small remainder term. A careful choice of the parameters $\delta_j $'s and the radii $\e_j$'s is made in section \ref{sec2}  (see \eqref{choice1}) in order to make $U$ be a good approximated solution. Indeed, the error term $R$ given by
\begin{equation}\label{R} 
R= \Delta U+\rho(V_1 e^{U} - V_2e^{-\tau U} )
\end{equation}
is  small in $L^p$-norm for $p>1$ close to $1$ (see Lemma \ref{erre}). A linearization procedure around $U$ leads us to re-formulate \eqref{lepd} in terms of a nonlinear problem for $\phi$ (see equation \eqref{ephi}). We will prove the existence of such a solution $\phi$ to \eqref{ephi} by using a fixed point argument, thanks to some estimates in section \ref{sec4} (see \eqref{estnphi}). The corresponding solution $u_\rho$ in \eqref{ansatz} blows-up at the points $\xi_i$'s thanks to the asymptotic properties of its main order term $U$ (see \eqref{1519} in Corollary \ref{coro927}). In Section \ref{sec3} we will prove the invertibility of the linear operator naturally associated to the problem (see \eqref{ol}) stated in  Proposition \ref{elle}. 

\section{The ansatz}\label{sec2}

\noindent Following the main ideas in \cite{EFP}, in this section we shall make a choice of the parameters $\de_j$'s in order to make $U$ a good approximation. Let $G(x,y)=-\frac{1}{2\pi}  \log |x-y|+H(x,y)$ be the Green function of $-\Delta$ in $\Omega$, where the regular part $H$ is a harmonic function in $\Omega$ so that $H(x,y)=\frac{1}{2\pi} \log |x-y|$ on $\fr\Om$. Let us introduce the coefficients $\be_{ij},$ $i,j=1,\dots,m,$ as the solution of the linear system
\begin{equation}\label{eqb2}
 {\be_{ij}}\left({1\over 2\pi}\log \e_j -H(\xi_j,\xi_j)\right)-\sum_{k\ne j}\be_{ik} G(\xi_j,\xi_k)=- 4\pi \al_i  H(\xi_i,\xi_j)+\left\{  \begin{array}{ll} 2 \al_i \log \delta_i &\hbox{if }i=j\\
2 \al_i  \log |\xi_i-\xi_j|& \hbox{if } i\not=j. \end{array} \right.
\end{equation}
Notice that \eqref{eqb2} can be re-written as the diagonally-dominant system
$${\be_{ij}} \log \e_j -2\pi \bigg[ \be_{ij} H(\xi_j,\xi_j)+ \sum_{k\ne j}\be_{ik} G(\xi_j,\xi_k) \bigg]=-8 \pi^2 \al_i H(\xi_i,\xi_j)+\left\{  \begin{array}{ll} 4\pi \al_i \log \delta_i &\hbox{if }i=j\\
4\pi \al_i  \log |\xi_i-\xi_j| & \hbox{if } i\not=j \end{array} \right.$$
for $\e_j$ small, which has a unique solution satisfying 
\begin{equation}\label{eqbij}
\beta_{ij}=\frac{4\pi \al_i \log \delta_i}{\log \e_j} \delta_{ij}+O(|\log \e_j|^{-1})
\end{equation}
where $\delta_{ij}$ is the Kronecker symbol. Introduce the projection $P_\e w$ as the unique solution of
\begin{equation}\label{ePu}
\left\{ \begin{array}{ll} 
\Delta P_\e w =\lap w &\text{in }\Om_\e\\
P_\e w=0,&\text{on }\fr\Om_\e.
\end{array}\right.
\end{equation} 
Notice that $w_i$ is a solution of the singular Liouville equation
\begin{equation*}
\left\{ \begin{array}{ll}\Delta w+|x -\xi_i|^{\alpha_i-2}e^{w}=0 &\text{in $\R^2$}\\
\ds\int_{\R^2} |x -\xi_i |^{\alpha_i-2} e^w dx<+\infty,& 
\end{array} \right.
\end{equation*}
By studying the harmonic function
$$\psi=P_\e w_i-w_i+\log\lf[2\al_i^2\de_i^{\al_i} \rg]-4\pi \al_i H(x,\xi_i)+\sum_{k=1}^m\be_{ik} G(x,\xi_k)$$
and using the maximum principle the following asymptotic expansion of $P_{\e}w_i$ was proved in \cite{EFP}.
\begin{lemma}\label{ewfxi}
There hold
\begin{eqnarray}\label{pui}
P_\e w_i = w_i-\log\lf[2 \al_i^2\de_i^{\al_i}\rg]+
4 \pi \al_i H(x,\xi_i)-\sum_{k=1}^m\be_{ik} G(x,\xi_k)+O \left( \de_i^{\al_i}+\Big(1+\frac{\log \delta_i }{\log \e_i}\Big)  \sum_{k=1}^m\e_k +\Big(\dfrac{ \e_i}{\de_i }\Big)^{\al_i }\right)
\end{eqnarray}
uniformly in $\Om_\e$ and 
\begin{eqnarray} \label{puii}
P_\e w_i=4 \pi \al_i G(x,\xi_i)-\sum_{k=1}^m\be_{ik} G(x,\xi_k)+O \left(\de_i^{\al_i}+\Big(1+\frac{\log \delta_i }{\log \e_i}\Big)  \sum_{k=1}^m\e_k +\Big(\dfrac{ \e_i}{\de_i }\Big)^{\al_i } \right)
\end{eqnarray}
locally uniformly in $\overline{\Om} \sm\{\xi_1,\dots,\xi_m\}$.
\end{lemma}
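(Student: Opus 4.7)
The plan is to study the auxiliary function
\[
\psi = P_\e w_i - w_i + \log\!\bigl[2\al_i^2\de_i^{\al_i}\bigr] - 4\pi\al_i H(x,\xi_i) + \sum_{k=1}^m \be_{ik} G(x,\xi_k),
\]
exploit its harmonicity, and apply the maximum principle on $\Om_\e$. Since $P_\e w_i-w_i$ is harmonic in $\Om_\e$ by the definition of the projection, $H(\cdot,\xi_i)$ is harmonic in $\Om$, and $G(\cdot,\xi_k)$ is harmonic away from $\xi_k\notin\Om_\e$, the function $\psi$ is indeed harmonic in $\Om_\e$. Therefore the whole task reduces to controlling $\psi$ on the three types of boundary pieces of $\fr\Om_\e$.

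On $\fr\Om$, the ingredients collapse nicely: $P_\e w_i=0$, $H(x,\xi_i)=\tfrac{1}{2\pi}\log|x-\xi_i|$, and $G(x,\xi_k)=0$, so a direct computation gives $\psi = 2\log\bigl(1+(\de_i/|x-\xi_i|)^{\al_i}\bigr)=O(\de_i^{\al_i})$, since $|x-\xi_i|$ is uniformly bounded below. On $\fr B(\xi_i,\e_i)$, the $w_i$ term is constant (depending on $\e_i,\de_i$), and Taylor-expanding the smooth functions $H(\cdot,\xi_i)$, $H(\cdot,\xi_j)$ and $G(\cdot,\xi_k)$ around $\xi_i$ produces the singular pieces $-\tfrac{\be_{ii}}{2\pi}\log\e_i + \be_{ii}H(\xi_i,\xi_i) + \sum_{k\neq i}\be_{ik}G(\xi_i,\xi_k) -4\pi\al_i H(\xi_i,\xi_i)$; this is exactly the combination that \eqref{eqb2} (with $j=i$) sets equal to $-2\al_i\log\de_i$, leaving $\psi = 2\log(1+(\e_i/\de_i)^{\al_i}) + O\bigl((1+\sum_k|\be_{ik}|)\e_i\bigr)$, which by \eqref{eqbij} matches the claimed error term. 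The analogous computation on $\fr B(\xi_j,\e_j)$ with $j\neq i$ uses that $|x-\xi_i|$ stays bounded away from $0$, so $w_i=\log[2\al_i^2\de_i^{\al_i}]-2\al_i\log|x-\xi_i|+O(\de_i^{\al_i})$; the remaining terms are again canceled by invoking \eqref{eqb2} for the index $j$.

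With $\psi$ controlled by the right-hand side of \eqref{pui} on every component of $\fr\Om_\e$, the maximum principle for harmonic functions transfers the bound to all of $\Om_\e$, yielding \eqref{pui}. For the second expansion \eqref{puii}, I would simply substitute the pointwise bound
\[
w_i(x)=\log[2\al_i^2\de_i^{\al_i}] - 2\al_i\log|x-\xi_i| + O\!\left(\bigl(\de_i/|x-\xi_i|\bigr)^{\al_i}\right)
\]
valid locally uniformly in $\bar\Om\sm\{\xi_i\}$, into \eqref{pui}, and use the identity $4\pi\al_i G(x,\xi_i)=-2\al_i\log|x-\xi_i|+4\pi\al_i H(x,\xi_i)$ to recombine the logarithm and $H$-term into a single Green function contribution.

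The only delicate step is the boundary estimate on $\fr B(\xi_i,\e_i)$, where the definition of $\be_{ij}$ has been engineered precisely to kill the $\log\e_i$ singularity that would otherwise appear; a careful bookkeeping of the error terms (in particular of the size of the $\be_{ij}$'s via \eqref{eqbij}) is needed to produce the factor $1+\log\de_i/\log\e_i$ multiplying $\sum_k\e_k$. Everything else is Taylor expansion plus one invocation of the maximum principle.
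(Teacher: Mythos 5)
Your proposal is correct and follows exactly the strategy the paper indicates (studying the harmonic function $\psi$ and applying the maximum principle; the paper itself defers the details to \cite{EFP}). Your boundary computations on $\fr\Om$, $\fr B(\xi_i,\e_i)$, and $\fr B(\xi_j,\e_j)$ are right, the use of \eqref{eqb2} to cancel the $\log\e_j$-singular contributions is as intended, and the reduction of \eqref{puii} to \eqref{pui} via $w_i=\log[2\al_i^2\de_i^{\al_i}]-2\al_i\log|x-\xi_i|+O(\de_i^{\al_i})$ together with the Green-function identity is the standard step.
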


\noindent From the definition of $U$ and using \eqref{pui}-\eqref{puii}, we need to impose 
\begin{equation}\label{choice0.1}\left\{\begin{array}{ll}
\sum\limits_{j=1 }^{m_1} \beta_{ji} - \frac1\tau\sum\limits_{j=m_1+1 }^{m } \beta_{ji}=2\pi(\al_i-2) & i=1,\dots,m_1\\
 - \tau\sum\limits_{j=1 }^{m_1} \beta_{ji}+ \sum\limits_{j=m_1+1 }^{m } \beta_{ji}=2\pi(\al_i-2) & i=m_1+1,\dots, m,
\end{array} \right.
\end{equation}
see Corollary \ref{coro927}. Taking into account \eqref{eqbij},  \eqref{choice0.1} requires at main order that $\alpha_i \log \delta_i\simeq\frac{\alpha_i-2}{2} \log \e_i$, i.e. $\delta_i^{\alpha_i}\sim \e_i^{\frac{\alpha_i-2}{2}}$. Moreover, due to the presence of 
$\log\lf[2 \al_i^2\de_i^{\al_i}\rg]$ in \eqref{pui}-\eqref{puii} we need further to assume that the $\de_i^{\al_i}$'s have the same rate, as it is well known in elliptic problems with exponential nonlinearity in dimension two, see \cite{CL1,CL2,DEFM,dkm,EsFi,EGP,Fi}.

\medskip \noindent Summarizing, for any $i=1,\dots,m$ we choose
\begin{equation}\label{choice1}
\de_i^{\al_i}=d_i\rho , \quad  \e_i^{\al_i-2\over 2}=
r_i\rho,
\end{equation}
for the small parameter $\rho>0$, where $d_i,r_i$ will be specified below, and introduce 
$$\rho_i= \left\{\begin{array}{ll}
 (\al_i+2)H(\xi_i,\xi_i)+\sum\limits_{j=1\atop j\not=i }^{m_1}   (\al_j+2)G(\xi_i,\xi_j) - \frac{1}{\tau} \sum\limits_{j=m_1+1}^{m }   (\al_j+2)G(\xi_i,\xi_j) & i=1,\dots,m_1\\
(\al_i+2)H(\xi_i,\xi_i) -  \tau \sum\limits_{j=1 }^{m_1}   (\al_j+2)G(\xi_i,\xi_j)+  \sum\limits_{j=m_1+1\atop j\not=i}^{m }   (\al_j+2)G(\xi_i,\xi_j) & i=m_1+1,\dots, m. \end{array} \right.$$
Setting $A_i=\overline{B(\xi_i,\eta)} \setminus B(\xi_i,\e_i)$ for $\eta<\frac{1}{2} \min\lf\{ |\xi_i-\xi_j| : i\not= j\ ;  \ \hbox{dist}(\xi_i,\partial\Om):i= 1,\dots,m \rg\} $, by Lemma \ref{ewfxi} we deduce the following expansion.
\begin{corollary} \label{coro927}
Assume the validity of \eqref{choice0.1}. There hold
\begin{eqnarray} \label{1517}
U = w_i-\log\lf[2 \al_i^2\de_i^{\al_i}\rg]+
(\alpha_i-2) \log |x-\xi_i|+2 \pi \rho_i+O \left( \rho+ \sum_{k=1}^m\rho^{\frac{2}{\alpha_k-2}} +|x-\xi_i|\right)
\end{eqnarray}
uniformly in $A_i$, $i=1,\dots,m_1$, 
\begin{eqnarray} \label{1518}
-\tau U_= w_i- \log\lf[2 \al_i^2\de_i^{\al_i}\rg]+(\alpha_i-2) \log |x-\xi_i|+
2 \pi \rho_i+ O \left( \rho+ \sum_{k=1}^m\rho^{\frac{2}{\alpha_k-2}} +|x-\xi_i|\right)
\end{eqnarray}
uniformly in $A_i$, $i=m_1+1,\dots,m$, and 
\begin{eqnarray} \label{1519}
&& U=2 \pi \sum_{i=1}^{m_1} (\al_i+2) G(x,\xi_i)-\frac{2 \pi}{\tau} \sum_{i=m_1+1}^m (\al_i+2) G(x,\xi_i)+ O \left( \rho+ \sum_{k=1}^m\rho^{\frac{2}{\alpha_k-2}}  \right)
\end{eqnarray}
locally uniformly in $\overline{\Om} \sm\{\xi_1,\dots,\xi_m\}$.
\end{corollary}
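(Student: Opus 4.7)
The statement is a consequence of Lemma \ref{ewfxi} combined with the algebraic constraints \eqref{choice0.1} and the scalings \eqref{choice1}. The plan is to substitute either \eqref{pui} or \eqref{puii} into the definition $U=\sum_{k=1}^{m_1}P_\e w_k-\frac{1}{\tau}\sum_{k=m_1+1}^m P_\e w_k$ according to whether $x$ is close to some $\xi_i$ or bounded away from every $\xi_j$, and then to collapse the resulting coefficients of $G$ and $H$ via \eqref{choice0.1}. Throughout, the $O(\cdot)$-remainders from Lemma \ref{ewfxi} are absorbed into $O\bigl(\rho+\sum_k\rho^{2/(\al_k-2)}\bigr)$ by using that $\de_i^{\al_i}=d_i\rho=O(\rho)$, $\e_k=O(\rho^{2/(\al_k-2)})$, $(\e_i/\de_i)^{\al_i}$ is strictly smaller, and $\log\de_i/\log\e_i$ is bounded.

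For the far-field identity \eqref{1519}, I apply \eqref{puii} to every term, which is legitimate on compact subsets of $\bra{\Om}\setminus\{\xi_1,\ldots,\xi_m\}$. Collecting the coefficient of $G(x,\xi_i)$ gives $4\pi\al_i-\sum_{k\le m_1}\be_{ki}+\frac{1}{\tau}\sum_{k>m_1}\be_{ki}$ when $i\le m_1$, and by \eqref{choice0.1} at index $i$ this reduces to $4\pi\al_i-2\pi(\al_i-2)=2\pi(\al_i+2)$. The symmetric bookkeeping for $i>m_1$ (where the $k=i$ contribution carries the prefactor $-1/\tau$) simplifies analogously to $-2\pi(\al_i+2)/\tau$, producing \eqref{1519}.

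For the near-field expansions \eqref{1517}--\eqref{1518} on $A_i$, I use \eqref{pui} for the single term $P_\e w_i$ (so as to keep the factor $w_i-\log[2\al_i^2\de_i^{\al_i}]$ explicit) and \eqref{puii} for every other $P_\e w_k$. Splitting $G(x,\xi_i)=-\frac{1}{2\pi}\log|x-\xi_i|+H(x,\xi_i)$, the only $\log|x-\xi_i|$-singularity on $A_i$ arises from the $j=i$ pieces of the $\be_{kj}G(x,\xi_j)$ terms, and its total coefficient equals $\frac{1}{2\pi}\bigl[\sum_{k\le m_1}\be_{ki}-\frac{1}{\tau}\sum_{k>m_1}\be_{ki}\bigr]=\al_i-2$ by \eqref{choice0.1}; every remaining piece is smooth near $\xi_i$, and Taylor-expanding at $\xi_i$ costs an $O(|x-\xi_i|)$ error. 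The case \eqref{1518} follows by multiplying $U$ by $-\tau$ and swapping the two index groups. The main obstacle I expect is identifying the constant as exactly $2\pi\rho_i$: one must track in parallel the coefficient of $H(\xi_i,\xi_i)$ (from $4\pi\al_i H(x,\xi_i)$ in \eqref{pui} and from the regular $H$-parts of the $j=i$ contributions to $-\be_{kj}G(x,\xi_j)$) and the coefficient of each $G(\xi_i,\xi_j)$ with $j\ne i$ (from the $k=j$ terms of $4\pi\al_k G(x,\xi_k)$ and from the $l=j$ pieces of $-\be_{kl}G(x,\xi_l)$); a second application of \eqref{choice0.1} at the appropriate indices collapses these into $2\pi(\al_i+2)$, $2\pi(\al_j+2)$, and $-2\pi(\al_j+2)/\tau$ respectively, which is precisely the decomposition of $2\pi\rho_i$.
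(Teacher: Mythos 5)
Your proposal is correct and follows essentially the same route as the paper: use \eqref{pui} for the dominant $P_\e w_i$ and \eqref{puii} for the remaining terms on $A_i$, split $G(x,\xi_i)$ into its logarithmic and regular parts, regroup the coefficients of each $G(x,\xi_j)$, and collapse them via \eqref{choice0.1} to identify $(\al_i-2)\log|x-\xi_i|$ and the constant $2\pi\rho_i$; the far-field case uses \eqref{puii} throughout with the same collapsing. The error accounting ($\de_i^{\al_i}=O(\rho)$, $\e_k=O(\rho^{2/(\al_k-2)})$, $(\e_i/\de_i)^{\al_i}$ subdominant, $\log\de_i/\log\e_i$ bounded) also matches the paper's.
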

\begin{proof}[\dem] The following expansions hold uniformly in $A_i$, $i=1,\dots,m_1$
\begin{equation*}
\begin{split}
U=&\, w_i-\log[2\al_i^2\de_i^{\al_i}] + 4\pi \al_i H(x,\xi_i)- \lf(\sum_{j=1}^{m_1} \beta_{ji}-\frac{1}{\tau} \sum_{j=m_1+1}^m \beta_{ji}\rg) G(x,\xi_i)-\sum\limits_{k=1\atop k\not=i}^{m } \beta_{ik}G(x,\xi_k)\\
&\, + \sum_{k=1\atop k\ne i}^{m_1}\bigg[4\pi\al_kG(x,\xi_k) -\sum\limits_{l=1\atop l\not=i}^{m } \beta_{kl}G(x,\xi_l)\bigg] -{1\over\tau} \sum_{k=m_1+1}^m\bigg[4\pi\al_kG(x,\xi_k) -\sum\limits_{l=1\atop l\not=i}^{m } \beta_{kl}G(x,\xi_l)\bigg] \\
&\,+ O \left( \rho+ \sum_{k=1}^m\rho^{\frac{2}{\alpha_k-2}}  \right).
\end{split}
\end{equation*}
Notice that
\begin{equation*}
\begin{split}
-&\sum_{k=1\atop k\ne i}^{m} \be_{ik}G(x,\xi_k)+\sum_{k=1\atop k\ne i}^{m_1} \bigg[ 4\pi \al_k  G(x,\xi_k) - \sum_{l=1\atop l\ne i}^{m} \be_{kl}G(x,\xi_l) \bigg]-{1\over\tau}\sum_{k=m_1}^{m}\bigg[4\pi\al_k G(x,\xi_k)- \sum_{l=1\atop l\ne i}^{m}\be_{kl} G(x,\xi_l)\bigg]\\
=&\sum_{j=1,j\ne i}^{m_1}\bigg[ 4\pi \al_j -\sum_{k=1}^{m_1} \be_{kj}+{1\over \tau} \sum_{k=m_1+1}^{m} \be_{kj} \bigg] G(x,\xi_j)- {1\over\tau} \sum_{j=m_1+1}^{m} \bigg[ 4\pi \al_j +\tau \sum_{k=1}^{m_1} \be_{kj}-\sum_{k=m_1+1}^{m} \be_{kj} \bigg] G(x,\xi_j). 
\end{split}
\end{equation*}
Hence, \eqref{1517} follows by using \eqref{choice0.1} and the choice of $\rho_i$. Similarly, we conclude \eqref{1518} and \eqref{1519}.
\end{proof}
\noindent As in \cite{EFP}, we obtain the validity of \eqref{choice0.1} by a suitable choice of $r_i$ and $d_i$. The following Lemma states a relation between $r_i$ and $d_i$, see \cite[Lemma 2.3]{EFP} for a proof.
\begin{lemma}\label{lem909} If $r_i=d_i e^{-\pi \rho_i}$ for all $i=1,\dots,m$, then \eqref{choice0.1} does hold.
\end{lemma}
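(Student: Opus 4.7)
My plan is to recast \eqref{choice0.1} as a single diagonally dominant linear system for certain column sums of $(\be_{ji})$, and then verify by direct substitution that under the hypothesis $r_i=d_ie^{-\pi\rho_i}$ an explicit guess solves it.

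To unify the two blocks of \eqref{choice0.1} I introduce the weights $c_j=1$ for $1\le j\le m_1$ and $c_j=-1/\tau$ for $m_1+1\le j\le m$, and set $\Gamma_i:=\sum_{j=1}^{m}c_j\be_{ji}$. One checks immediately that \eqref{choice0.1} is equivalent to the single family of identities $\Gamma_i=2\pi c_i(\al_i-2)$ for $i=1,\dots,m$: for $i\le m_1$ it is the first block, while for $i>m_1$ one multiplies by $-\tau$ to recover the second block.

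Relabeling $(i,j)\leftrightarrow(j,i)$ in \eqref{eqb2} and summing over $j$ with weight $c_j$ produces, since the factor in front of $\be_{ji}$ on the left hand side depends only on the column index $i$, a closed $m\times m$ system
\begin{equation*}
\Gamma_i\lf(\tfrac{\log\e_i}{2\pi}-H(\xi_i,\xi_i)\rg)-\sum_{k\ne i}\Gamma_k G(\xi_i,\xi_k)=-4\pi\sum_{j}c_j\al_j H(\xi_j,\xi_i)+2c_i\al_i\log\de_i+2\sum_{j\ne i}c_j\al_j\log|\xi_j-\xi_i|.
\end{equation*}
For $\rho$ small the diagonal entries $\log\e_i$ dominate the bounded off-diagonal entries $G(\xi_i,\xi_k)$, so this system is strictly diagonally dominant; it therefore suffices to check that the guess $\Gamma_i=2\pi c_i(\al_i-2)$ satisfies each scalar equation.

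The verification is a cancellation exercise. After splitting each $G$ on the left hand side as $-\tfrac{1}{2\pi}\log|\cdot|+H$, the coefficients of the $H(\xi_k,\xi_i)$'s and of the $\log|\xi_k-\xi_i|$'s combine into multiples of $c_k(\al_k+2)$, which is precisely the structure appearing in the definition of $\rho_i$. In the unified notation one has
\begin{equation*}
c_i\rho_i=c_i(\al_i+2)H(\xi_i,\xi_i)+\sum_{j\ne i}c_j(\al_j+2)G(\xi_i,\xi_j),
\end{equation*}
valid for both ranges of $i$, and plugging this in makes the entire $H$-and-log block collapse to $-2\pi c_i\rho_i$. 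What remains is the scalar identity $2\al_i\log\de_i-(\al_i-2)\log\e_i=2\pi\rho_i$, which by \eqref{choice1} is equivalent to $\log(d_i/r_i)=\pi\rho_i$, i.e., precisely the hypothesis $r_i=d_ie^{-\pi\rho_i}$. The main delicate step is the algebraic rearrangement leading to the unified formula for $\rho_i$; once it is in place, diagonal dominance of the $\Gamma$-system forces $\Gamma_i=2\pi c_i(\al_i-2)$ and hence \eqref{choice0.1}.
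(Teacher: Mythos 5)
Your proof is correct, and the algebra checks out: summing the transposed system \eqref{eqb2} against the weights $c_j$ does collapse to a diagonally dominant $m\times m$ system in $\Gamma_i=\sum_j c_j\be_{ji}$, the unified identity $c_i\rho_i=c_i(\al_i+2)H(\xi_i,\xi_i)+\sum_{j\ne i}c_j(\al_j+2)G(\xi_i,\xi_j)$ is exactly the content of the two-case definition of $\rho_i$, and the residual scalar identity $2\al_i\log\de_i-(\al_i-2)\log\e_i=2\pi\rho_i$ is, via \eqref{choice1}, precisely $\log(d_i/r_i)=\pi\rho_i$. Be aware, though, that the paper itself gives no proof of this lemma — it only cites \cite[Lemma~2.3]{EFP} — so there is no in-paper argument to compare against; your self-contained derivation (reduce to a diagonally dominant system for the weighted column sums, then verify the candidate by substitution and invoke uniqueness) is the natural route and almost certainly mirrors what is done in the cited reference.
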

\noindent Finally, we need that $-\Delta P_\e w_i=|x-\xi_i|^{\alpha_i-2} e^{w_i}$ match with $\rho V_1 e^{U}$ and $\rho \tau V_2 e^{-\tau U}$ in $A_i$ for $i=1,\dots,m_1$ and for $i=m_1+1,\dots,m$, respectively. As we will see below, this is achieved by requiring that 
\begin{equation}\label{choice3}
2 \al_j^2\de_j^{\al_j}  =\rho  V_1(\xi_j) e^{2\pi \rho_j } \quad\text{and}\quad
2 \al_j^2\de_j^{\al_j}  =\rho \tau  V_2(\xi_j) e^{2\pi \rho_j }
\end{equation}
for $i=1,\dots,m_1$ and for $i=m_1+1,\dots,m$, respectively. The choice 
\begin{equation}\label{choiced}
d_i=\left\{ \begin{array}{ll}
\frac{V_1(\xi_i) e^{2\pi  \rho_i}}{2\alpha_i^2} &i=1,\dots,m_1\\
\frac{V_2(\xi_i) e^{2\pi  \rho_i}}{2\alpha_i^2\tau } &i=m_1+1,\dots,m
\end{array} , \right. \quad r_i=\left\{ \begin{array}{ll}
\frac{V_1(\xi_i) e^{\pi \rho_i}}{2\alpha_i^2} &i=1,\dots,m_1\\
\frac{V_2(\xi_i) e^{\pi \rho_i}}{2\alpha_i^2\tau } &i=m_1+1,\dots,m
\end{array} \right.
\end{equation}
guarantees the validity of \eqref{choice0.1} and \eqref{choice3} in view of Lemma \ref{lem909}. We are now ready to estimate the precision of our ansatz $U$.
\begin{lemma}\label{erre}
There exists $\rho_0>0,$ $p_0>1$ and  $C>0$ such that for any $\rho\in(0,\rho_0)$ and $p\in(1,p_0)$
\begin{equation}\label{re1}
\|R\|_p\le  C \rho^{\sigma_p}
\end{equation}
for some $\sigma_p>0$.
\end{lemma}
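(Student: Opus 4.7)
The plan is to estimate $R$ separately on the annuli $A_i$ introduced before Corollary~\ref{coro927} and on the exterior piece $\Om^*:=\Om_\e\sm\bigcup_{i=1}^m\overline{B(\xi_i,\eta)}$, then combine. The starting point is the identity $-\lap P_\e w_k=|x-\xi_k|^{\al_k-2}e^{w_k}$, which yields
$$
\lap U = -\sum_{k=1}^{m_1}|x-\xi_k|^{\al_k-2}e^{w_k} + \frac{1}{\tau}\sum_{k=m_1+1}^{m}|x-\xi_k|^{\al_k-2}e^{w_k}.
$$

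On $\Om^*$, expansion \eqref{1519} makes $U$ uniformly bounded, so $\rho V_1 e^U,\,\rho V_2 e^{-\tau U}=O(\rho)$; moreover $|x-\xi_k|\ge\eta$ gives $|x-\xi_k|^{\al_k-2}e^{w_k}=O(\de_k^{\al_k})=O(\rho)$ via \eqref{choice1}, hence $R=O(\rho)$ on $\Om^*$. On $A_i$ with $i\le m_1$, exponentiating \eqref{1517} and combining $V_1(x)=V_1(\xi_i)+O(|x-\xi_i|)$ with the first identity in \eqref{choice3}, namely $\rho V_1(\xi_i)e^{2\pi\rho_i}=2\al_i^2\de_i^{\al_i}$, I reach
$$
\rho V_1(x)\,e^U = |x-\xi_i|^{\al_i-2}e^{w_i}\Bigl[1 + O\Bigl(\rho+\sum_{k=1}^m\rho^{2/(\al_k-2)}+|x-\xi_i|\Bigr)\Bigr].
$$
The remaining bubbles in $\lap U$ are $O(\rho)$ on $A_i$, so $\lap U=-|x-\xi_i|^{\al_i-2}e^{w_i}+O(\rho)$; and \eqref{1517} also keeps $U$ bounded below on $A_i$ (the scaling $\de_i^{\al_i}\sim\rho\sim\e_i^{(\al_i-2)/2}$ handles both the $|x-\xi_i|\le\de_i$ and $|x-\xi_i|\ge\de_i$ portions), which gives $\rho V_2 e^{-\tau U}=O(\rho)$. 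A symmetric argument using \eqref{1518} and the second identity in \eqref{choice3} covers $A_i$ with $i>m_1$. On every $A_i$ we then have the pointwise estimate
$$
|R|\le C\,|x-\xi_i|^{\al_i-2}e^{w_i}\Bigl(\rho+\sum_{k=1}^m\rho^{2/(\al_k-2)}+|x-\xi_i|\Bigr) + C\rho.
$$

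For the $L^p$-bound I will rescale $y=(x-\xi_i)/\de_i$, which turns the leading factor into
$$
\int_{A_i}\bigl(|x-\xi_i|^{\al_i-2}e^{w_i}\bigr)^p dx = C\,\de_i^{2-2p}\int_{|y|\le\eta/\de_i}\frac{|y|^{(\al_i-2)p+1}}{(1+|y|^{\al_i})^{2p}}\,dy.
$$
The $y$-integral converges uniformly in $\rho$ as soon as $(\al_i+2)p>2$, which is automatic since $\al_i>2$, so the left side is $O(\de_i^{2-2p})=O(\rho^{(2-2p)/\al_i})$. Taking $p$-th roots and multiplying by $\rho+\sum_k\rho^{2/(\al_k-2)}$, or (with an extra factor $|y|^p\de_i^p$ inside the integral) by $|x-\xi_i|$, each piece of the pointwise bound becomes $\rho^{\sigma}$ with $\sigma>0$ once $p-1$ is small; adding the $O(\rho)$ contribution from $\Om^*$ delivers \eqref{re1}.

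The main obstacle is the leading-order cancellation on each $A_i$: both $\lap U$ and $\rho V_1 e^U$ (resp. $\rho V_2 e^{-\tau U}$) individually blow up like $|x-\xi_i|^{-2}$ at $\xi_i$, and must agree to leading order for $R$ to be small in $L^p$. This cancellation holds only through the \emph{simultaneous} use of \eqref{choice0.1} (so that expansion \eqref{1517} is valid as written) and \eqref{choice3} (so that the scalar factor $\rho V_1(\xi_i)e^{2\pi\rho_i}/(2\al_i^2\de_i^{\al_i})$ is exactly $1$); losing either spoils the bound. The remaining delicate point is choosing $p_0>1$ so that the integrability condition $(\al_i+2)p>2$ and the positivity of the final exponent $\sigma_p$ are compatible, which is possible precisely because $\al_i>2$.
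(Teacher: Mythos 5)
Your proposal is correct and follows essentially the same route as the paper: compute $\Delta U$ from the projected bubble equation, use the annulus expansions \eqref{1517}--\eqref{1518} together with the matching condition \eqref{choice3} to cancel the leading $|x-\xi_i|^{\alpha_i-2}e^{w_i}$ factor, bound the cross terms (other bubbles and the ``wrong-sign'' nonlinearity) by $O(\rho)$ using the decay of the bubble away from its center and the lower bound for $e^U$ on $A_i$, and then pass to $L^p$ by rescaling $y=(x-\xi_i)/\delta_i$, picking $p$ close to $1$ so the exponent $\sigma_p>0$ survives. The paper packages the wrong-sign term slightly differently (via the estimates \eqref{1502}--\eqref{1503} and a unified additive $O(\rho^\sigma)$ with $\sigma=\min_i\{1/\alpha_i\}$), but the decomposition, the cancellation mechanism, and the rescaled integral estimate are the same.
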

\begin{proof}[\dem] First, note that
$$\Delta U=\left\{ \begin{array}{ll} -|x-\xi_i|^{\alpha_i-2} e^{w_i}+O(\rho)&\hbox{in }A_i, \ i=1,\dots,m,\\[0.2cm] 
\ds \frac{ 1}{\tau} |x-\xi_i|^{\alpha_i-2}e^{w_i}+O(\rho)& \hbox{in }A_i,\ i=m_1+1,\dots,m\\
O(\rho) &\hbox{in }\Omega_\e \setminus \ds \bigcup_{i=1}^m A_i \end{array} \right.$$
in view of  \eqref{choice1}. 
By \eqref{1517} we have that
\begin{eqnarray}\label{214}
V_1 e^{U} =
\frac{V_1(\xi_i) e^{2 \pi \rho_i}}{2 \al_i^2\de_i^{\al_i}} |x-\xi_i|^{\alpha_i-2} e^{w_i}\lf[1+ O\bigg( \rho+ \sum_{k=1}^m\rho^{\frac{2}{\alpha_k-2}} +|x-\xi_i| \bigg)\rg] \label{1036}
\end{eqnarray}
uniformly in $A_i$ for any $i=1,\dots,m_1$ and by \eqref{1518}
\begin{eqnarray} \label{1503} V_1 e^{U} =
O\lf(\Big[\frac{|x-\xi_i|^{\alpha_i-2}}{\delta_i^{\alpha_i}} e^{w_i}\Big]^{-\frac{1}{\tau}}\rg)\quad \text{uniformly in $A_i$, for $i=m_1+1,\dots,m$.}  
\end{eqnarray} 
 Similarly, by \eqref{1518} we have that
 \begin{eqnarray} \label{1037}
V_2 e^{-\tau U} = \frac{V_2(\xi_i) e^{2 \pi \rho_i}}{2 \al_i^2\de_i^{\al_i}} |x-\xi_i|^{\alpha_i-2} e^{w_i}\lf[1+ O\bigg( \rho+ \sum_{k=1}^m\rho^{\frac{2}{\alpha_k-2}} +|x-\xi_i| \bigg)\rg] 
\end{eqnarray}
uniformly in $A_i$ for any $i=m_1+1,\dots,m$ and by \eqref{1517}
\begin{eqnarray}\label{1502}
V_2 e^{-\tau U} =
O\lf(\Big[\frac{|x-\xi_i|^{\alpha_i-2}}{\delta_i^{\alpha_i}} e^{w_i}\Big]^{-\tau}\rg) \quad \text{ uniformly in $A_i$, for $i=1,\dots,m_1$.}
\end{eqnarray}
Hence, by using \eqref{choice1}, \eqref{choice3}, \eqref{choiced} and \eqref{214}-\eqref{1502} we can estimate the error term $R$ as:
 \begin{eqnarray} \label{ex5}
R =|x-\xi_i|^{\al_i-2}e^{w_i} O\Big(\rho^\sigma+|x-\xi_i|\Big) +O(\rho^\sigma) 
\end{eqnarray}
in $A_i$, $i=1,\dots,m_1$, and 
\begin{eqnarray} \label{ex6.1}
R =- \frac{1}{\tau}  |x-\xi_i|^{\alpha_i-2} e^{w_i} O\Big(\rho^\sigma +|x-\xi_i|\Big)+O(\rho^\sigma) 
\end{eqnarray}
in $A_i$, $i=m_1+1,\dots,m$, where $\sigma=\min\{{1\over \al_i} \ : \ i=1,\dots,m \}$, while $ R=O(\rho)$ does hold in $\Omega_\e \setminus \ds \bigcup_{i=1}^m A_i $. By \eqref{ex5}-\eqref{ex6.1} we finally get that there exist $\rho_0>0$ small, $p_0>1$ close to $1$ so that $\|R\|_p=O(\rho^{\sigma_p})$ for all $0<\rho\leq \rho_0$ and $1<p\leq p_0$, for some $\sigma_p>0$.
\end{proof}

\section{The nonlinear problem and proof of main result}\label{sec4}

In this section we shall study the existence of a function $\phi_\rho \in H^1_0(\Om_\e)$,  a small remainder term which satisfies the following nonlinear problem:
\begin{equation}\label{ephi}
\left\{ \begin{array}{ll}
L(\phi)=-[R+N(\phi)] 
& \text{in } \Om_\e\\
\phi=0, &\text{on }\fr\Om_\e,
\end{array} \right.
\end{equation}
where the linear operator $L$ is defined as
\begin{equation}\label{ol}
L(\phi) = \Delta \phi + W(x)\phi,\qquad W(x):=\rho V_1(x) e^{U} + \rho  \tau V_2(x) e^{-\tau U}
\end{equation}
and the nonlinear term $N(\phi) $ is given by
\begin{equation}\label{nlt}
\begin{aligned}
N(\phi)= &\rho V_1(x)e^{U}\big(e^{ \phi}- \phi - 1\big)-\rho  V_2(x) e^{-\tau U}\big( e^{ -\tau \phi}+ \tau  \phi  -1 \big)\\ 
\end{aligned}
\end{equation}
It is readily checked that $\phi$ is a solution to \eqref{ephi} if and only if $u_\e$ given by \eqref{ansatz} is a solution to \eqref{lepd}. In section \ref{sec3} we will prove the following result.

\begin{proposition}\label{elle}
For any $p>1,$ there exists $\rho_0>0 $ and  $C>0$ such that for any $\rho\in(0,\rho_0)$ and $h \in L^p(\Om_\e)$ there exists a unique $\phi\in H^1_0(\Om_\e)$ solution of
\begin{equation}\label{pl}
L(\phi)=h \ \hbox{ in }\ \Om_\e,\ \ \ \phi=0\ \hbox{ on }\ \partial\Omega_\e,
\end{equation}
which satisfies
\begin{equation}\label{estphi}
\|\phi\|\le C|\log\rho | \ \|h\|_p.
\end{equation}
\end{proposition}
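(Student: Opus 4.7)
The plan is to prove the a priori bound \eqref{estphi} by a contradiction/blow-up argument and then deduce solvability of \eqref{pl} from the Fredholm alternative, using that $L=\Delta+W$ with $W\in L^q(\Omega_\e)$ for some $q>1$ is a compact perturbation of $\Delta$ on $H^1_0(\Omega_\e)$.

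For the a priori bound, I would suppose by contradiction that there exist sequences $\rho_n\to 0^+$, $h_n\in L^p(\Omega_{\e_n})$ and $\phi_n\in H^1_0(\Omega_{\e_n})$ with $L_{\rho_n}\phi_n=h_n$, $\|\phi_n\|=1$ and $|\log\rho_n|\,\|h_n\|_p\to 0$. Corollary~\ref{coro927} and the explicit form of $U$ imply $W_n=\rho_n V_1 e^{U_n}+\rho_n\tau V_2 e^{-\tau U_n}\to 0$ uniformly on compact subsets of $\overline\Omega\setminus\{\xi_1,\dots,\xi_m\}$, so standard elliptic estimates applied to $-\Delta\phi_n=W_n\phi_n-h_n$ yield $\phi_n\to\phi^\infty$ in $C^1_{\mathrm{loc}}(\overline\Omega\setminus\{\xi_1,\dots,\xi_m\})$, with $\phi^\infty$ bounded, harmonic, and vanishing on $\partial\Omega$. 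Removability of isolated singularities of bounded harmonic functions in $\R^2$ and the maximum principle then give $\phi^\infty\equiv 0$.

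Near each $\xi_i$ I would rescale $\tilde\phi_{n,i}(y)=\phi_n(\xi_i+\delta_{n,i}y)$ on the annular region $\{\e_{n,i}/\delta_{n,i}<|y|<\eta/\delta_{n,i}\}$; by \eqref{choice1} this exhausts $\R^2\setminus\{0\}$. The expansions \eqref{1517}--\eqref{1518} combined with the matching \eqref{choice3} give $\tilde W_{n,i}(y):=\delta_{n,i}^2 W_n(\xi_i+\delta_{n,i}y)\to 2\alpha_i^2|y|^{\alpha_i-2}(1+|y|^{\alpha_i})^{-2}$ locally uniformly, and the rescaled right-hand side tends to $0$ in weighted $L^p_{\mathrm{loc}}$. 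Extracting a limit, $\phi^*_i$ is a bounded solution in $\R^2$ of the linearized singular Liouville equation; the classification result for such linearizations, applicable since $\alpha_i>2$ and $\alpha_i\notin 2\mathbb N$ (so translational modes are absent), yields $\phi^*_i(y)=c_i(1-|y|^{\alpha_i})/(1+|y|^{\alpha_i})$. Matching at any intermediate scale with the outer limit $\phi^\infty\equiv 0$ forces $-c_i=\lim_{|y|\to\infty}\phi^*_i(y)=0$, so $\phi^*_i\equiv 0$ for every $i$.

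To close the contradiction I test $L_{\rho_n}\phi_n=h_n$ against $\phi_n$, obtaining $\|\phi_n\|^2=\int_{\Omega_{\e_n}}W_n\phi_n^2-\int_{\Omega_{\e_n}}h_n\phi_n$. The second term is $o(1)$ by H\"older and the Sobolev embedding $H^1_0\hookrightarrow L^{p'}$ together with $\|h_n\|_p\to 0$. The first splits into an $O(\rho_n)$ contribution on $\Omega_{\e_n}\setminus\bigcup_i B(\xi_i,\eta)$ and local pieces which after rescaling become $\int\tilde W_{n,i}\,\tilde\phi_{n,i}^2\,dy$; since $\phi^*_i\equiv 0$ and $\tilde W_{n,i}$ is uniformly integrable and tight on $\R^2$, each piece tends to $0$, contradicting $\|\phi_n\|=1$. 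Once \eqref{estphi} is known, uniqueness in \eqref{pl} is immediate and existence follows from Fredholm's alternative. The main obstacle I anticipate is obtaining the uniform $L^\infty$ control of $\tilde\phi_{n,i}$ required to pass to the limit in $\int W_n\phi_n^2$, since an $H^1$ bound alone is not enough in two dimensions; this is handled via Green's representation combined with the concentrated profile of $W_n$, and it is precisely at this step that the factor $|\log\rho|$ in \eqref{estphi} arises, from the interaction of the 2D Green's function's logarithmic decay with the concentrating potential.
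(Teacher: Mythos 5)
Your overall architecture (contradiction, inner rescaling, energy identity, Fredholm alternative) matches the paper's, but there is a genuine gap at the crucial step where you conclude that the bubble coefficients vanish, i.e.\ the step ``Matching at any intermediate scale with the outer limit $\phi^\infty\equiv 0$ forces $-c_i=\lim_{|y|\to\infty}\phi^*_i(y)=0$.'' This does not follow. The rescaled convergence $\tilde\phi_{n,i}\to c_iY_{0i}$ you obtain is only in the weighted space $L_{\alpha_i}(\R^2)$ (equivalently, locally in $y$), while the outer limit $\phi^\infty\equiv0$ lives at the scale $|x-\xi_i|\sim 1$, i.e.\ $|y|\sim\delta_i^{-1}\to\infty$. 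Between these two regimes the sequence can interpolate through a logarithmic transition layer: a profile of the form $c_i\log(|y|\delta_i)/\log\delta_i$, going from the value $-c_i$ near $|y|\sim O(1)$ to $0$ near $|y|\sim\delta_i^{-1}$, has Dirichlet energy of order $c_i^2/|\log\delta_i|\sim c_i^2/|\log\rho|$, which is $o(1)$ and hence perfectly compatible with $\|\phi_n\|=1$. Thus no matching contradiction arises; this loss over $\sim|\log\rho|$ dyadic scales is exactly the source of the factor $|\log\rho|$ in \eqref{estphi}, and it is precisely why $Y_{0i}$ (which is bounded but nondecaying, $Y_{0i}\to-1$ at infinity) is the dangerous kernel direction in two dimensions. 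The same logarithmic layer also makes the inner boundary condition $\phi_n=0$ on $|y|=\e_i/\delta_i$ useless for forcing $c_i=0$.

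To close this gap the paper does not match asymptotics; instead it constructs carefully projected test functions $P_\e Z_j$ with $Z_j=\eta_j+\gamma_j^*\eta_{0j}$, whose purpose is to extract an orthogonality relation \eqref{aj0} of Pohozaev type after multiplying \eqref{eqphin} by $P_\e Z_j$ and integrating by parts. The coefficients $\gamma_{ij}$, $\ti\gamma_{ij}$, $\gamma_j^*$ are chosen precisely so that the troublesome logarithmic terms ($\log\delta_j$, $\log\e_j$) cancel at leading order, and the remaining identity forces $a_j=0$ directly. This is the nontrivial content of Claim~\ref{claim3}, and it is where the entire analytic difficulty of the proposition sits. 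Your remaining remarks (the $o(1)$ bound on $\int h_n\phi_n$, the uniform $L^\infty$ control, the Green's-function heuristic for the $|\log\rho|$ factor) point in the right direction, but they rest on $\phi_i^*\equiv0$, which you have not established. In addition, the classification you invoke for the rescaled limit is slightly imprecise: for $\alpha_i>2$, $\alpha_i\notin2\mathbb{N}$, the bounded kernel of $L_i$ is spanned by $Y_{0i},Y_{1i},Y_{2i}$, and it takes an argument (the paper invokes finite Dirichlet energy and, if needed, symmetry) to reduce to the $Y_{0i}$ direction before addressing the coefficient.
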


The latter proposition implies that the unique solution $\phi=T(h)$ of \eqref{pl} defines a continuous linear map from $L^p(\Om_\e)$ into $H_0^1(\Om_\e)$, with norm bounded by $C|\log\rho|$. Concerning problem \eqref{ephi} we have the following fact.

\begin{proposition}\label{p3}
There exist $p_0>1$ and $\rho_0>0$ so that for any $1<p<p_0$ and 
all $0<\rho \leq \rho_0$, the problem \eqref{ephi} admits a unique solution $\phi(\rho) \in H_0^1(\Om_\e)$, where $ R$, $L$ and $ N$ are given by \eqref{R}, \eqref{ol} and \eqref{nlt}, respectively. Moreover, there is a constant $C>0$ such that
$$\|\phi\|_\infty\le C\rho^{\sigma_p} |\log \rho |,$$
for some $\sigma_p>0$.
\end{proposition}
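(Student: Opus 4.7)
The plan is to recast \eqref{ephi} as the fixed-point problem $\phi = \mathcal{A}(\phi) := -T\lf(R + N(\phi)\rg)$, where $T \colon L^p(\Om_\e) \to H_0^1(\Om_\e)$ is the continuous right-inverse of $L$ furnished by Proposition \ref{elle}, whose operator norm is $O(|\log\rho|)$. I will show that for $p>1$ sufficiently close to $1$ and some $\sigma_p>0$, the map $\mathcal{A}$ is a contraction on the closed ball
$$\mathcal{F}_r := \{\phi\in H_0^1(\Om_\e) \,:\, \|\phi\|\le r\},\qquad r := C_0\,\rho^{\sigma_p}|\log\rho|,$$
for a sufficiently large constant $C_0>0$ and all small $\rho>0$.

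The first step is to estimate $N(\phi)$ in $L^p$. Using the elementary inequalities $|e^s - 1 - s|\le s^2 e^{|s|}$ and $|e^{-\tau s}-1+\tau s|\le \tau^2 s^2 e^{\tau|s|}$, one has pointwise
$$|N(\phi)|\le C\lf(\rho V_1 e^{U}+\rho V_2 e^{-\tau U}\rg)\phi^2 \,e^{C|\phi|},$$
where the coefficient $W(x)=\rho V_1 e^U+\rho\tau V_2 e^{-\tau U}$ from \eqref{ol}, by Corollary \ref{coro927} and the choices \eqref{choice1}--\eqref{choiced}, behaves near each $\xi_i$ like $|x-\xi_i|^{\al_i-2}e^{w_i}$, so that $\|W\|_{q_1}$ is bounded uniformly in $\rho$ for $q_1>1$ close to $1$ (this is exactly the estimate underlying Lemma \ref{erre}). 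H\"older's inequality with exponents $\tfrac1p=\tfrac1{q_1}+\tfrac1{q_2}+\tfrac1{q_3}$ then yields
$$\|N(\phi)\|_p\le C\|W\|_{q_1}\|\phi\|_{2q_2}^2\,\|e^{C|\phi|}\|_{q_3},$$
and the last two factors are controlled by the 2D Sobolev embedding $H_0^1\hookrightarrow L^s$ for every $s<\infty$ and the Moser--Trudinger inequality, giving $\|N(\phi)\|_p\le C\|\phi\|^2$ whenever $\|\phi\|\le 1$.

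Combining with Lemma \ref{erre}, $\|R+N(\phi)\|_p\le C\rho^{\sigma_p}+C\|\phi\|^2$; Proposition \ref{elle} then gives
$$\|\mathcal{A}(\phi)\|\le C|\log\rho|\lf(\rho^{\sigma_p}+\|\phi\|^2\rg),$$
so by choosing $C_0=2C$ and $\rho$ so small that $C|\log\rho|\,r\le 1/2$, $\mathcal{A}$ sends $\mathcal{F}_r$ into itself. A parallel computation, starting from
$$|N(\phi_1)-N(\phi_2)|\le C\,W(x)(|\phi_1|+|\phi_2|)\,e^{C(|\phi_1|+|\phi_2|)}\,|\phi_1-\phi_2|,$$
yields $\|N(\phi_1)-N(\phi_2)\|_p\le C(\|\phi_1\|+\|\phi_2\|)\|\phi_1-\phi_2\|$ on $\mathcal{F}_r$, hence $\|\mathcal{A}(\phi_1)-\mathcal{A}(\phi_2)\|\le C|\log\rho|\,r\,\|\phi_1-\phi_2\|\le\tfrac12\|\phi_1-\phi_2\|$. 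Banach's fixed-point theorem supplies a unique $\phi=\phi(\rho)\in\mathcal{F}_r$ solving \eqref{ephi}, with $\|\phi\|\le C\rho^{\sigma_p}|\log\rho|$. For the pointwise bound, rewrite \eqref{ephi} as $-\Delta\phi=W\phi+R+N(\phi)$; each term on the right is in $L^p(\Om_\e)$ with norm $O(\rho^{\sigma_p})$ (using H\"older once more for $W\phi$), and standard elliptic regularity together with the embedding $W^{2,p}(\Om_\e)\hookrightarrow L^\infty(\Om_\e)$ for $p>1$ in dimension two deliver $\|\phi\|_\infty\le C\rho^{\sigma_p}|\log\rho|$.

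The main obstacle is the exponent bookkeeping in Step 1: the uniform $L^{q_1}$-bound on $W$ forces $q_1$ very close to $1$, which in turn restricts $p$ to a narrow interval $(1,p_0)$ and leaves only a small exponent $\sigma_p>0$; simultaneously one must keep $q_3$ large enough so that the Moser--Trudinger factor $\|e^{C|\phi|}\|_{q_3}$ remains bounded for $\phi\in\mathcal{F}_r$. Once the exponents are balanced, the remaining steps are standard.
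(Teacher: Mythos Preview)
Your overall architecture --- recast \eqref{ephi} as a fixed point for $\mathcal{A}=-T(R+N(\cdot))$ via Proposition~\ref{elle} and contract on a ball of radius $\sim\rho^{\sigma_p}|\log\rho|$ --- is exactly the paper's. The gap is in your estimate of $\|N(\phi)\|_p$: the assertion that $\|W\|_{q_1}$ is bounded uniformly in $\rho$ for some $q_1>1$ is \emph{false}. Scaling $x=\xi_i+\de_i y$ as in \eqref{1036} gives
\[
\int_{A_i}\big(|x-\xi_i|^{\al_i-2}e^{w_i}\big)^{q_1}dx
\;=\;C\,\de_i^{\,2-2q_1}\int\Big(\frac{|y|^{\al_i-2}}{(1+|y|^{\al_i})^{2}}\Big)^{q_1}dy
\;\sim\;\de_i^{\,2-2q_1},
\]
which diverges as $\de_i\to0$ for every $q_1>1$; only the $L^1$ norm of $W$ is uniformly bounded. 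Your appeal to Lemma~\ref{erre} is a misreading: what makes $\|R\|_p$ small in \eqref{ex5}--\eqref{ex6.1} is precisely the extra factor $O(\rho^\sigma+|x-\xi_i|)$ multiplying $|x-\xi_i|^{\al_i-2}e^{w_i}$, and $W$ does not carry that factor.

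The repair is what the paper does in the lemma immediately preceding the proof: track the mild blow-up, writing $\|W\|_{q_1}=O(\rho^{\sigma_{0,q_1}})$ with $\sigma_{0,q_1}=\min_j\frac{2-2q_1}{\al_j q_1}<0$ and $\sigma_{0,q_1}\to0^-$ as $q_1\to1^+$. Your H\"older/Moser--Trudinger split then yields $\|N(\phi)\|_p\le C\rho^{\sigma_{0,q_1}}\|\phi\|^2$, which on $\mathcal F_r$ is $O(\rho^{2\sigma_p+\sigma_{0,q_1}}|\log\rho|^2)$; choosing $p$ and $q_1$ close enough to $1$ so that $\sigma_p+\sigma_{0,q_1}>0$ restores the contraction. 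The same correction is needed in your Lipschitz estimate and in the bound on $\|W\phi\|_p$ in the final $L^\infty$ step. So the strategy is sound, but the exponent bookkeeping you flag as ``the main obstacle'' is genuinely where the work lies, and as written the proof rests on a false uniform bound.
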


Here, $\sigma_p$ is the same as in \eqref{re1}. We shall use the following estimate.

\medskip

\begin{lemma}
There exist $p_0>1$ and $\rho_0>0$ so that for any $1<p<p_0$ and 
all $0<\rho\leq \rho_0$ it holds
\begin{equation}\label{estnphi}
\|  N (\phi_1)- N(\phi_2) \|_p  \le C\rho^{\sigma_p'} \|\phi_1-\phi_2\|
\end{equation}
for all $\phi_i\in H_0^1(\Om_\e)$ with $\|\phi_i\|\le M \rho^{\sigma_p}|\log\rho|$, $i=1,2$, and for some $\sigma_p'>0$. In particular, we have that
\begin{equation}\label{estnphi1}
\| N (\phi) \|_p  \le C\rho^{\sigma_p'} \ \|\phi\|
\end{equation}
for all $\phi\in H_0^1(\Om_\e)$ with $\|\phi\|\le M \rho^{\sigma_p}|\log\rho|$.
\end{lemma}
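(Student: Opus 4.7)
The strategy combines the mean value theorem with H\"older's inequality and the Moser--Trudinger embedding. Writing $f(s)=e^s-s-1$ (so $f'(s)=e^s-1$) and $g(s)=e^{-\tau s}+\tau s-1$ (so $g'(s)=\tau(1-e^{-\tau s})$), the mean value theorem gives pointwise
\begin{equation*}
|N(\phi_1)-N(\phi_2)|\ \le\ \rho V_1 e^U\,|e^{\zeta_1}-1|\,|\phi_1-\phi_2|\ +\ \rho\tau V_2 e^{-\tau U}\,|1-e^{-\tau\zeta_2}|\,|\phi_1-\phi_2|,
\end{equation*}
with $\zeta_j(x)$ between $\phi_1(x)$ and $\phi_2(x)$. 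The elementary bound $|e^t-1|\le|t|e^{|t|}$ then yields
\begin{equation*}
|N(\phi_1)-N(\phi_2)|\ \le\ C\,W(x)\,(|\phi_1|+|\phi_2|)\,e^{\tau(|\phi_1|+|\phi_2|)}\,|\phi_1-\phi_2|,
\end{equation*}
with $W$ the weight defined in \eqref{ol}.

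Next I would apply H\"older with three exponents $1/p=1/a+1/b+1/c$, choosing $a>1$ very close to $1$ and $b,c<\infty$ large. The weight norm $\|W\|_a$ is estimated using \eqref{214} and \eqref{1037} together with the scaling $\delta_i^{\alpha_i}\sim\rho$ from \eqref{choice1}: a change of variables $x=\xi_i+\delta_i y$ in each annulus $A_i$ gives $\|W\|_a^a\le C\rho^{-2(a-1)/\min_i\alpha_i}$; the contribution from $\Om_\e\setminus\bigcup_i A_i$ is $O(\rho)$, while the cross-contributions $\rho V_2 e^{-\tau U}$ near positive bubbles (resp.\ $\rho V_1 e^U$ near negative bubbles) are much smaller thanks to \eqref{1502} and \eqref{1503}. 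Hence $\|W\|_a\le C\rho^{-\theta_a}$ with $\theta_a=2(a-1)/(a\min_i\alpha_i)\to0$ as $a\to1^+$. For the middle factor, Cauchy--Schwarz together with the Sobolev embedding $H^1_0(\Om_\e)\hookrightarrow L^{2b}$ yields $\||\phi_1|+|\phi_2|\|_{2b}\le C(\|\phi_1\|+\|\phi_2\|)\le CM\rho^{\sigma_p}|\log\rho|$, while Moser--Trudinger applied to the $\phi_i$'s (whose $H^1_0$-norms tend to $0$) gives $\|e^{\tau(|\phi_1|+|\phi_2|)}\|_{2b}\le C$. The last factor is controlled by Sobolev: $\|\phi_1-\phi_2\|_c\le C\|\phi_1-\phi_2\|$.

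Combining the three bounds,
\begin{equation*}
\|N(\phi_1)-N(\phi_2)\|_p\ \le\ C\rho^{\sigma_p-\theta_a}|\log\rho|\,\|\phi_1-\phi_2\|.
\end{equation*}
Choosing $a$ close enough to $1$ (and thereby, via the constraint $1/p=1/a+1/b+1/c$, $p$ close enough to $1$) so that $\theta_a<\sigma_p$, one absorbs the logarithm into a slightly smaller exponent $\sigma_p'\in(0,\sigma_p-\theta_a)$, obtaining \eqref{estnphi}. Estimate \eqref{estnphi1} follows immediately by setting $\phi_2\equiv 0$ and noting $N(0)=0$.

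The principal technical obstacle lies in the weight bound $\|W\|_a\le C\rho^{-\theta_a}$ with $\theta_a$ small: one must verify the $\R^2$-integrability of $|y|^{(\alpha_i-2)a}(1+|y|^{\alpha_i})^{-2a}$ (automatic at the origin since $\alpha_i>2$, and at infinity since $(\alpha_i+2)a>2$), carefully absorb the subdominant cross terms near bubbles of opposite sign, and ultimately balance the H\"older exponents so that the gain $\rho^{\sigma_p}$ from the smallness of $\|\phi_i\|$ outruns the loss $\rho^{-\theta_a}$ from the weight.
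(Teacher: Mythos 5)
Your proof is correct and follows essentially the same route as the paper: a mean-value-theorem expansion of $N$, a three-factor generalized H\"older inequality, the same scaling computation $\|W\|_a=O(\rho^{-\theta_a})$ with $\theta_a\to 0$ as $a\to 1^+$, and a final balance of exponents so that the gain $\rho^{\sigma_p}$ from the small $H^1_0$-norm of the $\phi_i$'s dominates the loss $\rho^{-\theta_a}$. Your explicit appeal to Moser--Trudinger to control $\|e^{\tau(|\phi_1|+|\phi_2|)}\|_{2b}$ is in fact a cleaner justification than the paper's remark that ``$|e^a-1|\le|a|$ uniformly on compact subsets,'' which tacitly uses an $L^\infty$ control of the perturbation that does not follow directly from the smallness of its $H^1_0$-norm.
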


\begin{proof}[\dem] We will argue in the same way as in \cite[Lemma 5.1]{op}, see also \cite[Lemma 3.4]{EFP}. First, we point out that
\begin{equation*}
N(\phi)=\sum_{i=1}^2\rho \lf\{f_i(\phi)-f_i(0)-f'_i(0)[\phi] \rg\},\ \ \ \text{where} \ \ \
f_i(\phi)= V_i(x) e^{(-\tau)^{i-1}(U+\phi)}.
\end{equation*}
Hence, by the mean value theorem we get that
\begin{equation}\label{mvtn}
\begin{split}
N(\phi_1)-N(\phi_2) 
&\,=\sum_{i=1}^2\rho \lf\{f_i'(\phi_{\theta_i} )-f_i'(0) \rg\} [\phi_1-\phi_2] =\sum_{i=1}^2\rho  f_i''(\ti \phi_{\mu_i}) [\phi_{\theta_i}, \phi_1-\phi_2],
\end{split}
\end{equation}
where $\phi_{\theta_i}=\theta_i\phi_1+(1-\theta_i)\phi_2$, $\ti\phi_{\mu_i}=\mu_i\phi_{\theta_i}$ for some $\theta_i,\mu_i\in[0,1]$, $i=1,2$, and \linebreak $f_i''(\phi)[\psi,v]=\tau^{2(i-1)} V_i(x)e^{ (-\tau)^{i-1}(U+\phi) } \psi v$. Using H\"older's inequalities we get that
\begin{equation}\label{hin}
\lf\| f_i''(\phi)[\psi,v]\rg\|_p\le  |\tau|^{2(i-1)} \| V_ie^{ (-\tau)^{i-1}(U+\phi)} \|_{pr_i}  \|\psi\|_{ps_i} \|v\|_{pt_i}
\end{equation}
with $\ds {1\over r_i} +{1\over s_i} + {1\over t_i}=1$. We have used the H\" older's inequality and $\ds \|uvw\|_q\le \|u\|_{qr}\|v\|_{qs}\|w\|_{qt}$ with $\ds{1\over r }+{1\over s }+{1\over t}=1$. Now, let us estimate $\ds  \| V_ie^{ (-\tau)^{i-1}(U+\phi)} \|_{pr_i} $ with $\phi=\ti\phi_{\mu_i}$, $i=1,2$. By \eqref{1517} and \eqref{214} and the change of variable $x=\delta_i y+\xi_i$ let us estimate
\begin{eqnarray}
\int_{A_i} \lf| V_1 e^{ U} \rg|^q dx
&=& O\lf( \delta_i^{2-(\al_i+2)q}  \int_{\frac{\e_i}{\delta_i}\leq |y|\leq \frac{\eta}{\delta_i} } \lf| \frac{|y|^{\alpha_i-2}}{(1+|y|^{\alpha_i})^2 } \rg|^q \lf[1+  O\bigg( \rho+ \sum_{k=1}^m\rho^{\frac{2}{\alpha_k-2}} +\delta_i |y| \bigg)\rg]^q dy \rg)\nonumber \\
&=& O\lf( \delta_i^{2-(\al_i+2)q} \rg) \label{1036}
\end{eqnarray}
for any $i=1,\dots,m_1$ and similarly, by \eqref{1518} and \eqref{1037} we get that
\begin{eqnarray} \label{10372}
\int_{A_i} \lf|V_2 e^{-\tau U} \rg|^qdx=O\lf( \delta_i^{2-(\al_i+2)q}  \rg)
\end{eqnarray}
for any $i=m_1+1,\dots,m$, in view of \eqref{choice1} and \eqref{choiced}. By \eqref{choice1}, \eqref{1518} and \eqref{1503} we get the estimate
\begin{eqnarray}
\int_{A_i} \lf|V_1 e^{ U} \rg|^qdx&=&
O\bigg(\de_i^{\al_iq\over \tau} \int_{A_i}  \Big[|x-\xi_i|^{\alpha_i-2} e^{U_i}\Big]^{-{q\over\tau} } dx \bigg) = O\bigg(\de_i^{{(\al_i+2)q \over \tau} +2} \int\limits_{\frac{\e_i}{\delta_i}\leq |y|\leq \frac{\eta}{\delta_i} } \Big[\frac{|y|^{\alpha_i-2}}{(1+|y|^{\alpha_i})^2 }  \Big]^{-{q\over \tau} } dy\bigg)\nonumber\\
&=&  O\bigg(\de_i^{{(\al_i +2)q \over \tau} +2} \bigg[\int_{\frac{\e_i}{\delta_i}}^1  s^{1-{(\al_i-2)q\over\tau} }\,ds + \int_1^\frac{\eta}{\delta_i}  s^{1+{(\al_i+2)q\over\tau} }\,ds\bigg]\bigg) = O(1) \label{0054}
\end{eqnarray}
for all $i=m_1+1,\dots,m$. Similarly, by \eqref{choice1}-\eqref{1517} we deduce that
\begin{eqnarray} \label{0107}
\int_{A_i} \lf| V_2 e^{-\tau  U} \rg|^q dx=O(1)
\end{eqnarray}
for $i=1,\dots,m_1$, in view of \eqref{1502}. Therefore, by using \eqref{1519}, \eqref{1036} and \eqref{0054} we deduce that
$$\lf\| V_1e^{U  } \rg\|_{q}^q=\sum_{i=1}^{m_1} O\lf( \de_i^{2  - (\al_i+2)q } \rg)=\sum_{i=1}^{m_1} O\lf( \rho^{ {2-(\al_i+2)q\over \al_i} }\rg) \quad\text{for any } q\ge 1$$
and, by using \eqref{1519}, \eqref{10372} and \eqref{0107} we obtain that 
$$\lf\| V_2e^{ -\tau U } \rg\|_{q}^q=\sum_{i=m_1+1}^{m} O\lf( \rho^{ {2-(\al_i+2)q\over \al_i} }\rg)\quad\text{for any } q\ge 1.$$
 
On the other hand, using the estimate $|e^a-1|\le |a|$ uniformly for any $a$ in compact subsets of $\R$ we have that
\begin{equation*}
\begin{split}
\lf\| V_ie^{(-\tau)^{i-1}(U +\ti\phi_{\mu_i})}- V_ie^{(-\tau)^{i-1} U }\rg\|_q&=\lf(\int_{\Om_\e} \lf| V_i e^{(-\tau)^{i-1}U} \rg|^q \ \lf| e^{(-\tau)^{i-1}\ti\phi_{\mu_i}}- 1  \rg|^q \rg)^{1/q}\\
&\le \lf\| V_ie^{(-\tau)^{i-1}U }\rg\|_{qs_i'}\lf\| \ti\phi_{\mu_i} \rg\|_{qt_i'},
\end{split}
\end{equation*}
with $\ds{1\over s_i'}+{1\over t_i'}=1$, $i=1,2$, in view of $\|\ti\phi_{\mu_i}\|\le  \rho^{\sigma_p}|\log \rho|\le C$, $i=1,2$. Hence, it follows that
$$\lf\| V_ie^{(-\tau)^{i-1}(U+\ti\phi_{\mu_i} ) }- V_ie^{(-\tau )^{i-1} U }\rg\|_q\le C \rho^{ \sigma_{0,qs_i'} -1 } \lf\| \ti\phi_{\mu_i} \rg\|\le C \rho^{ \sigma_{0,qs_i'} -1}\ \rho^{\sigma_p}|\log \rho|,$$
where for $q>1$ we denote $\ds \sigma_{0, q } = \min\lf\{ {2- 2  q\over \al_j q }\ :\ j=1,\dots,m\rg\}$, in view of $\|\ti\phi_{\mu_i}\|\le  \rho^{\sigma_p}|\log \rho|$, $i=1,2$. Note that $\ds \sigma_{0,q} -1\le {2-(\al_i+2)q\over \al_i q}\quad\text{ for any} \quad i=1,\dots,m$ and $\sigma_{0,q}<0$ for any $q>1$. By the previous estimates we find that 
\begin{equation}\label{ev}
 \lf\| V_i e^{(-\tau)^{i-1} (U+\ti\phi_{\mu_i} )}\rg\|_{q}=O\lf( \rho^{ \sigma_{0,qs_i'} -1 + \sigma_p}|\log \rho|+ \rho^{ \sigma_{0,q} -1 } \rg).
 \end{equation}
Also, choosing $q$ and $s_i'$, $i=1,2$, close enough to 1, we get that  $0< \sigma_p +\sigma_{0,qs_i'}$. Now, we can conclude the estimate by using \eqref{mvtn}-\eqref{ev} to get
\begin{equation*}
\begin{split}
\|N(\phi_1)-N(\phi_2)\|_p &\,\le \sum_{i=1}^2\rho \lf\| f_i''(\ti \phi_{\mu_i}) [\phi_{\theta_i}, \phi_1-\phi_2]\rg\|_p  \le\,C \sum_{i=1}^2\rho  \|V_i e^{ (- \tau)^{i-1}(U+\ti\phi_{\mu_i} )}\|_{p r_i}  \| \phi_{\theta_i} \| \| \phi_1-\phi_2\|\\
&\, \le\,C \sum_{i=1}^2 \rho^{\sigma_p +\sigma_{0,pr_i} }  | \log \rho| \| \phi_1-\phi_2\|
 \le\,C \rho^{\sigma_p' }  \|\phi_1-\phi_2\|,
\end{split}
\end{equation*}
where $\sigma'_p={1\over 2}\min\{ \sigma_p + \sigma_{0,pr_i}  \ : \ i=1,2\} >0$ choosing $r_i$ close to 1 so that $\sigma_p+\sigma_{0,pr_i} >0$ for $i=1,2$. Let us stress that $p>1$ is chosen so that $\sigma_p>0$ and $\sigma_p'>0$.
\end{proof}

We are now in position to study the nonlinear problem \eqref{ephi} and to prove our main result Theorem \ref{main}.

\begin{proof}[\bf Proof of the Proposition \ref{p3}] Notice that from Proposition \ref{elle} problem \eqref{ephi} becomes
$$\phi=-T(R +N(\phi)):=\ml{A}(\phi).$$
For a given number $M>0$, let us consider $
\ml{F}_M = \{\phi\in H : \| \phi \| \le
M \rho^{\sigma_p} |\log\rho|\}$. 
From the Proposition \ref{elle}, \eqref{re1} and \eqref{estnphi1}, we get for any $\phi\in \ml{F}_M$,
\begin{equation*}
\begin{split}
\|\ml{A}(\phi)\| & \le C | \log \rho |\lf[ \| R \|_p+ \|N(\phi)\|_p\rg] 
\le C |\log \rho |\lf[ \rho^{\sigma_p}+\rho^{\sigma_p'} \|\phi\|\rg]\\
& \le C \rho^{\sigma_p} | \log \rho |\lf[1 + M\rho^{ \sigma_p' } |\log\rho|\rg].
\end{split}
\end{equation*}
Given any $\phi_1,\phi_2\in\ml{F}_M$, we have that $\ml{A}(\phi_1)-\ml{A}(\phi_2) = - T\lf(N(\phi_1)
- N(\phi_2)\rg)$ and
\begin{equation*}
\begin{split}
\|\ml{A}(\phi_1)-\ml{A}(\phi_2)\| & \le C |\log
\rho | \lf\| N(\phi_1) - N(\phi_2)\rg\|_p
\le C\rho^{\sigma_p'} |\log\rho| \ \|\phi_1-\phi_2\|,
\end{split}
\end{equation*}
with $C$ independent of $M$, by using Proposition \ref{elle} and \eqref{estnphi}. Therefore, for some $\sigma>0$ we get that $\|\ml{A}(\phi_1)-\ml{A}(\phi_2)\|  \le C \rho^{\sigma} |\log \rho| \|\phi_1-\phi_2\|$. It follows that for all $\rho$ sufficiently small $\ml{A}$ is a
contraction mapping of $\ml{F}_M$ (for $M$ large enough), and
therefore a unique fixed point of $\ml{A}$ exists in $\ml{F}_M$.
\end{proof}

\begin{proof}[\bf Proof of the Theorem \ref{main}] Taking into account \eqref{ansatz} and the definition of $U$, the existence of a solution 
$$u_\rho=\sum_{j=1}^{m_1}P_\e w_j - \dfrac{1}{\tau } \sum_{j=m_1+1}^m P_\e w_j + \phi$$ 
to equation \eqref{lepd} follows directly by Proposition \ref{p3}. The asymptotic behavior of $u_\rho$ as $\rho\to 0^+$ follows from \eqref{1519} in Lemma \ref{ewfxi} and estimate for $\phi$ in Proposition \ref{p3}. Precisely, we have that  as $\rho\to 0^+$
$$u_\rho=2 \pi \sum_{i=1}^{m_1} (\al_i+2) G(\cdot,\xi_i)-\frac{2 \pi}{\tau} \sum_{i=m_1+1}^m (\al_i+2) G(\cdot,\xi_i)+ o(1)$$
locally uniformly in $\bar\Om \sm\{\xi_1,\dots,\xi_m\}$. Therefore, from the behavior of Green's function we conclude that $u_\rho$ blows-up positively at $\xi_1,\dots,\xi_{m_1}$ and negatively at $\xi_{m_1+1},\dots,\xi_m$ as $\rho$ goes to zero.
\end{proof}


\section{The linear theory}\label{sec3}

In this section, following ideas presented in \cite[Section 4]{EFP} we present the invertibility of the linear operator $L$ defined in \eqref{ol}. Roughly speaking in the scale annulus $\de_i^{-1} (B_i-\xi_i)$ the operator $L$ apporaches to the following linear operator in $\R^2$
$$L_i(\phi)=\Delta\phi+{2\al_i^2|y|^{\al_i-2}\over (1+|y|^{\al_i})^2}\phi,\qquad i=1,\dots, m.$$
It is well known that the bounded solutions of $L_i(\phi)=0$ in
$\R^2$ are precisely linear combinations of the functions
\begin{equation*}
Y_{1i}(y) = { |y|^{\al_i\over 2} \over 1+|y|^{\al_i}}\cos\Big({\al_i\over 2}\theta\Big),\quad Y_{2i}(y) = { |y|^{\al_i\over 2} \over 1+|y|^{\al_i}}\sin\Big({\al_i\over 2}\theta\Big)\quad\text{and}\quad Y_{0i}(y) = {1-|y|^{\al_i}\over 1+|y|^{\al_i}},
\end{equation*}
which are written in polar coordinates for $i=1,\dots, m$. See \cite{DEM5} for a proof. In our case, we will consider solutions of $L_i(\phi)=0$ such that $\int_{\R^2}|\nabla\phi(y)|^2\, dy<+\infty$. See \cite[Theorem A.1]{op} for a proof. 

Let us introduce the following Banach spaces for $j=1,2$
\begin{equation}\label{lai}
L_{\al_i}(\R^2)=\lf\{u\in W_{\text{loc} }^{1,2}(\R^2)\ :\ \int_{\R^2}{|y|^{\al_i-2}\over (1+|y|^{\al_i})^2}|u(y)|^2\, dy<+\infty\rg\}
\end{equation}
and
\begin{equation}\label{hai}
H_{\al_i}(\R^2)=\lf\{u\in W_{\text{loc} }^{1,2}(\R^2)\ :\ \int_{\R^2}|\nabla u(y)|^2\, dy+\int_{\R^2}{|y|^{\al_i-2}\over (1+|y|^{\al_i})^2}|u(y)|^2\, dy<+\infty\rg\}
\end{equation}
endowed with the norms
$$\|u\|_{L_{\al_i}}:=\lf(\int_{\R^2}{|y|^{\al_i-2}\over (1+|y|^{\al_i})^2}|u(y)|^2\, dy\rg)^{1/2}$$
and
$$\|u\|_{H_{\al_i}}:=\lf(\int_{\R^2} |\nabla u(y)|^{2}\, dy+\int_{\R^2}{|y|^{\al_i-2}\over (1+|y|^{\al_i})^2}|u(y)|^2\, dy\rg)^{1/2}.$$
It is important to point out the compactness of the embedding $i_{\al_i}:H_{\al_i}(\R^2)\to L_{\al_i}(\R^2)$, (see for example \cite{GP}).

\begin{proof}[\bf Proof of the Proposition \ref{elle}]
The proof will be done in several steps. Let us assume the opposite, namely, the existence of $p>1$, sequences $\rho_n\to0$, $\ve_n:=\e(\rho_n)\to0$, functions $h_n\in L^p(\Om_{\ve_n})$, $\phi_n\in W^{2,2}(\Om_{\ve_n})$ such that
\begin{equation}\label{eqphin}
L(\phi_n)=h_n\ \ \text{in}\ \ \Om_{\ve_n},\ \ \phi_n=0\ \ \text{on}\ \ \fr\Om_{\ve_n}
\end{equation}
$\|\phi_n\|=1$ and $|\log\rho_n|\ \|h_n\|_p=o(1)$ as $n\to+\infty$. We will shall omit the subscript $n$ in $\de_{i,n} =\de_i$. Recall that $\de_i^{\al_i}=d_{i,n}\rho_n$ and points $\xi_1,\dots,\xi_m\in\Om$ are fixed.

Now, define $\Phi_{i,n}(y):=\phi_{n}(\xi_i+\de_i y)$ for $y\in\Om_{i,n}:=\de_i^{-1}(\Om_{\ve_n}-\xi_i)$, $i=1,\dots,m$. Thus, extending $\phi_n=0$ in $\R^2\sm\Om_{\ve_n}$ we can prove the following fact.

\begin{claim}\label{claim2}
There holds that the sequence $\Phi_{i,n}$ converges (up to subsequence) to $\Phi_i^*=a_jY_{0i}$ for $i=1,\dots,m$, weakly in $H_{\al_i}(\R^2)$ and strongly in $L_{\al_i}(\R^2)$ as $n\to+\infty$ for some constant $a_{i}\in\R$, $i=1,\dots,m$.
\end{claim}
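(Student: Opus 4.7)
The strategy is a blow-up/compactness argument at each bubble point $\xi_i$: rescale $\phi_n$ at the bubble scale $\de_i$ to obtain $\Phi_{i,n}$, establish uniform bounds in $H_{\al_i}(\R^2)$, extract a weak/strong limit, identify the limit as a finite-energy solution of $L_i\phi=0$ in $\R^2$, and then invoke the classification (together with the non-integrality assumption $\al_i/2\notin\N$) to force the limit to be a scalar multiple of $Y_{0i}$.

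\textbf{Step 1 (uniform $H_{\al_i}$-bound).} The Dirichlet piece is immediate from scale invariance of $\int|\nabla\cdot|^2$ in dimension two: extending $\phi_n$ by zero outside $\Om_{\ve_n}$,
$$\int_{\R^2}|\nabla\Phi_{i,n}|^2\,dy=\int_{\Om_{\ve_n}}|\nabla\phi_n|^2\,dx=\|\phi_n\|^2=1.$$
For the weighted $L^2$ piece I test \eqref{eqphin} against $\phi_n$, obtaining $\int W\phi_n^2\,dx=1+\int h_n\phi_n\,dx$, in which the last integral is $o(1)$ by H\"older and Moser--Trudinger (since $\|\phi_n\|_{p'}\le C\|\phi_n\|=C$ and $|\log\rho_n|\,\|h_n\|_p=o(1)$). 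Using \eqref{choice1} and \eqref{choice3} together with the estimates \eqref{214} and \eqref{1037} to compare $W$ with $\frac{2\al_i^2|y|^{\al_i-2}}{(1+|y|^{\al_i})^2}$ in the rescaled variable $y=(x-\xi_i)/\de_i$ turns this into the required bound on $\int_{\R^2}\frac{|y|^{\al_i-2}}{(1+|y|^{\al_i})^2}|\Phi_{i,n}|^2\,dy$.

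\textbf{Step 2 (limit equation and classification).} By the compact embedding $H_{\al_i}(\R^2)\hookrightarrow L_{\al_i}(\R^2)$, along a subsequence $\Phi_{i,n}\rightharpoonup\Phi_i^*$ weakly in $H_{\al_i}$ and strongly in $L_{\al_i}$. Rescaling \eqref{eqphin} gives
$$\Delta_y\Phi_{i,n}+\de_i^2W(\xi_i+\de_i y)\Phi_{i,n}=\de_i^2h_n(\xi_i+\de_i y),$$
whose right-hand side tends to zero in $L^p_{\mathrm{loc}}(\R^2)$ since $\de_i^{2-2/p}\|h_n\|_p\to 0$. The estimates \eqref{214}, \eqref{1037} (for the dominant bubble) together with \eqref{1503} and \eqref{1502} (for the cross contributions) show that $\de_i^2W(\xi_i+\de_i y)\to \frac{2\al_i^2|y|^{\al_i-2}}{(1+|y|^{\al_i})^2}$ locally uniformly on $\R^2\sm\{0\}$, the contributions from the other bubbles $\xi_k$, $k\ne i$, and from the shrinking hole $|y|\le \e_i/\de_i\to 0$ being negligible. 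Passing to the limit in the distributional sense, $\Phi_i^*$ satisfies $L_i(\Phi_i^*)=0$ in $\R^2\sm\{0\}$, and the origin is removable because $\Phi_i^*\in H_{\al_i}(\R^2)$ by weak lower semi-continuity, so $L_i(\Phi_i^*)=0$ in all of $\R^2$. The classification cited from \cite[Theorem A.1]{op} gives $\Phi_i^*=a_iY_{0i}+b_iY_{1i}+c_iY_{2i}$; because $\al_i/2\notin\N$ the angular factors $\cos(\al_i\theta/2)$ and $\sin(\al_i\theta/2)$ are not $2\pi$-periodic, hence $Y_{1i},Y_{2i}$ are not single-valued on $\R^2$ and cannot belong to $H_{\al_i}(\R^2)$. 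Therefore $b_i=c_i=0$ and $\Phi_i^*=a_iY_{0i}$.

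\textbf{Main obstacle.} The delicate point is the identification of the limiting equation: the potential $W$ carries contributions from all $m$ bubbles, and after rescaling at $\xi_i$ one must show that only the $i$-th bubble survives in the limit, while the contributions from the bubbles at $\xi_k$ ($k\ne i$), from the neighboring holes, and from the far-field $|y|\gg 1$ vanish uniformly on every bounded set. The sharp pointwise expansions of Lemma \ref{ewfxi} and Corollary \ref{coro927} are what make this case-by-case comparison go through.
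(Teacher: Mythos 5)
Your proposal follows the same blow-up/compactness route as the paper: unit Dirichlet energy by scale invariance, weighted $L^2$ bound by testing the equation against $\phi_n$ and using $|\log\rho_n|\,\|h_n\|_p=o(1)$, compact embedding $H_{\al_i}\hookrightarrow L_{\al_i}$, identification of the limit equation via the pointwise matching $\de_i^2W(\xi_i+\de_i y)\to \frac{2\al_i^2|y|^{\al_i-2}}{(1+|y|^{\al_i})^2}$ (the paper's \eqref{dik1}) with vanishing right-hand side since $\de_i^{2-2/p}\|h_n\|_p\to0$, and finally the classification of finite-energy kernel elements of $L_i$. The only place you diverge from the written text is the last step: the paper disposes of $Y_{1i},Y_{2i}$ with the terse remark ``by using symmetry assumptions if necessary,'' whereas you give the explicit and cleaner reason, namely that for $\al_i\notin 2\N$ the angular factors $\cos(\al_i\theta/2),\sin(\al_i\theta/2)$ are not $2\pi$-periodic, so $Y_{1i},Y_{2i}$ are not genuine functions on $\R^2$ and the finite-energy kernel is one-dimensional. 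This is the correct mechanism underlying the citation to \cite[Theorem A.1]{op}, and it is in fact a more transparent justification than the one in the paper.
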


\begin{proof}[\dem]
First, we shall show that the sequence $\{\Phi_{i,n}\}_n$ is bounded in $H_{\al_i}(\R^2)$. Notice that for $i=1,\dots,m$
$$\|\Phi_{i,n}\|_{H^1_0(\Omega_{i,n})}=\int_{\Om_{i,n}}\de_i^2|\grad \phi_{i,n}(\xi_i+\de_i y)|^2\,dy=\int_{\Om_{\ve_n} }|\grad\phi_n(x)|^2\, dx=1.$$ 
Thus, we want to prove that there is a constant $M>0$ such for all $n$ (up to a subsequence)
$$\|\Phi_{i,n}\|_{L_{\al_i}}^2=\int_{\Omega_{i,n} } {|y|^{\al_i-2}\over (1+|y|^{\al_i})^2} \Phi_{i,n}^2(y)\, dy\le M.$$
Notice that for any $i\in\{1,\dots,m\}$ we find that
 in $\Om_{i,n}$
\begin{equation}\label{eqPhi1}
\begin{split}
\lap \Phi_{i,n}&+\de_i^2W (\xi_i+\de_i y) \Phi_{i,n} =\de_i^2h_n(\xi_i+\de_i y).
\end{split}
\end{equation}
Furthermore, it follows that $\Phi_{i,n}\to\Phi_i^*$ weakly in $H^1_0(\Omega_{i,n})$ and strongly in $L^p(K)$ for any $K$   compact sets in $\mathbb R^2$.
Now, we multiply \eqref{eqPhi1} by $\Phi_{i,n}$ for any $i\in\{1,\dots,m\}$ and we get 
\begin{equation*}
\begin{split}
-\int_{\Omega_{i,n} }&|\nabla \Phi_{i,n}|^2 + \int_{\Omega_{i,n} } \de_i^2W(\xi_i+\de_i y)  \Phi_{i,n}^2=\,\int_{\Omega_{i,n} }\de_i^2h_n(\xi_i+\de_i y) \Phi_{i,n}.
\end{split}
\end{equation*}
Using \eqref{choice1}, \eqref{1517}, \eqref{1518} and \eqref{choiced}, we obtain that
\begin{equation}\label{dik1}
\de_i^2W(\xi_i+\de_i y)=\ds{2\al_i^2|y|^{\al_i-2}\over (1+|y|^{\al_i})^2}+O(\de_i^2\rho)\quad \text{ for all }i=1,\dots,m
\end{equation}
uniformly for $y$ on compact subsets of $\R^2$.
Thus, we deduce that
\begin{equation}\label{npsi}
\sum_{i=1}^{m} 2\al_i^2\| \Phi_{i,n}\|_{L_{\al_i}}^2
=\,1 + o(1)
\end{equation}
in view of 
$$\int_{\Om_{i,n}} \de_i^2W(\xi_i+\de_i y )\Phi_{i,n}^2\, dy=\int_{\Om_{\e_n}}W\phi^2_n\, dx=\sum_{i=1}^{m} 2\al_i^2\| \Phi_{i,n}\|_{L_{\al_i}}^2 + o(1)$$
since
$$\int_{\Om_{i,n} } {2\al_i^2|y|^{\al_i-2}\over (1+|y|^{\al_i})^2} \Phi_{i,n}^2(y)\, dy=2\al_i^2\|\Phi_{i,n}\|_{L_{\al_i} }^2,\ \ \text{for }i=1,\dots,m.$$
Therefore, the sequence $\{\Phi_{i,n}\}_n$ is bounded in $H_{\al_i}(\R^2)$, so that there is a subsequence $\{\Phi_{i,n}\}_n$ and functions $\Phi_i^*$, $i=1,2$ such that $\{\Phi_{i,n}\}_n$ converges to $\Phi_i^*$ weakly in $H_{\al_i}(\R^2)$ and strongly in $L_{\al_i}(\R^2)$. 
Furthermore, we have that
$$\int_{\Om_{i,n}} (\de_i^2|h_n(\xi_i+\de_i y)|)^p\, dy=\de_i^{2p-2}\int_{\Om_{\e_n} }|h_n(x)|^p\, dx=\de_i^{2p-2}\|h_n\|_p^p=o(1).$$
Hence, taking into account \eqref{eqPhi1}-\eqref{dik1} we deduce that $\Phi^*_i$ a solution to 
$$\lap\Phi+{2\al_i^2|y|^{\al_i-2}\over(1+|y|^{\al_i})^2}\Phi=0,\qquad i=1,\dots,m,\qquad  \text{in $\R^2\sm\{0\}$}.$$
It is standard that $\Phi_i^*$, $i=1,\dots,m$ extend to a solution in the whole $\R^2$. Hence, by using symmetry assumptions if necessary, we get that $\Phi^*_i=a_iY_{0i}$ for some constant $a_{i}\in\R$, $i=1,\dots,m$.
\end{proof}

For the next step we construct some suitable test functions. To this aim, introduce the coefficients $\gamma_{ij}$'s and $\ti\gamma_{ij}$'s, $i,j=1,\dots,m$, as the solution of the linear systems
\begin{equation}\label{gamaij}
\gamma_{ij}\lf[-{1\over 2\pi}\log\e_i+H(\xi_i,\xi_i)\rg]+\sum_{k=1,k\ne i}^m \gamma_{kj} G(\xi_k,\xi_i)=2\de_{ij}
\end{equation}
and
\begin{equation}\label{gamatij}
\ti\gamma_{ij}\lf[-{1\over 2\pi}\log\e_i+H(\xi_i,\xi_i)\rg]+\sum_{k=1,k\ne i}^m \ti \gamma_{kj} G(\xi_k,\xi_i)=\begin{cases}
\ds {4\over 3} \al_j\log\de_j +{8\over 3} + {8\pi\over 3}\al_j H(\xi_j,\xi_j)& \text{if } i=j\\[0.3cm]
\ds {8\pi\over 3}\al_j G(\xi_i,\xi_j),& \text{if } i\ne j,
\end{cases}
\end{equation}
respectively. Notice that both systems \eqref{gamaij} and \eqref{gamatij} are diagonally dominant, system \eqref{gamaij} has solutions
$$\gamma_{ij}=
 -{4\pi\over \log\e_j}\de_{ij}+O\Big({1\over |\log\rho|^2}\Big)=-{2\pi(\al_j-2)\over \log \rho} \de_{ij}+O\Big({1\over |\log\rho|^2}\Big)$$
and for the system \eqref{gamatij} we get
$$\ti\gamma_{ij}=\ds -{8\pi\over 3}{\al_j\log\de_j\over\log\e_j}\de_{ij}+O\Big({1\over |\log\rho|}\Big)=-{4\pi\over 3}(\al_j-2)\de_{ij}+O\Big({1\over |\log\rho|}\Big)$$
where $\delta_{ij}$ is the Kronecker symbol. Here, we have used \eqref{choice1}. Consider now for any $j\in\{1,\dots, m\}$ the functions $\ds \eta_{0j}(x)= - \dfrac{2\de_j^{\al_j} }{\de_j^{\al_j}+|x-\xi_j|^{\al_j}}$
and
$$\eta_j(x)={4\over 3}\log(\de_j^{\al_j}+|x-\xi_j|^{\al_j}){\de_j^{\al_j}-|x-\xi_j|^{\al_j}\over \de_j^{\al_j}+|x-\xi_j|^{\al_j} }+ {8\over 3}{\de_j^{\al_j} \over \de_j^{\al_j}+|x-\xi_j|^{\al_j} },$$
 so that 
 $$\lap \eta_{0j} + |x-\xi_j|^{\al_j-2}e^{w_j} \eta_{0j}=-|x-\xi_j|^{\al_j}e^{w_j}\quad\text{ and }\quad\lap\eta_j+|x - \xi_j|^{\al_j-2}e^{w_j}\eta_j=|x-\xi_j|^{\al_j-2}e^{w_j}Z_{0j},$$
where $Z_{0j}(x)=Y_{0j}(\de_j^{-1} [x-\xi_j])=\dfrac{\de_j^{\al_j}-|x-\xi_j|^{\al_j}}{\de_j^{\al_j}+|x-\xi_j|^{\al_j}}$. Notice that $\eta_{0j} + 1= -Z_{0j}$ and, by similar arguments as to obtain expansion \eqref{pui}, by studying the harmonic functions $\ds f(x)=P_\e \eta_{0j}(x)-\eta_{0j}(x)-\sum_{i=1}^m\gamma_{ij}G(x,\xi_i)$ and $\ds \ti f(x)=P_\e \eta_{j}(x)-\eta_{j}(x)-{8\pi\over 3}\al_jH(x,\xi_j)+\sum_{i=1}^m\ti\gamma_{ij}G(x,\xi_i)$, we have that the following fact, as shown in \cite{EFP}.

\begin{lemma}
There hold
$$P_\e \eta_{0j}=\eta_{0j} + \sum_{i=1}^m \gamma_{ij} G(x,\xi_i)+O(\rho^{\ti\sigma})\quad\text{ and }\quad P_\e \eta_{j}=\eta_{j}+{8\pi\over 3}\al_j H(x,\xi_j)-\sum_{i=1}^m \ti\gamma_{ij} G(x,\xi_i)+O(\rho^{\ti\sigma})$$ 
uniformly in $\Om_\e$ for some $\ti\sigma>0$.
\end{lemma}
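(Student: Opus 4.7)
The plan is to use the maximum principle on the two explicitly constructed harmonic differences $f$ and $\tilde f$, whose boundary values have been arranged by the linear systems \eqref{gamaij} and \eqref{gamatij} to be of order $O(\rho^{\tilde\sigma})$. The key point is that the coefficients $\gamma_{ij},\tilde\gamma_{ij}$ were chosen precisely to annihilate the leading contributions of $\eta_{0j}$ and $\eta_j$ on the interior circles $\fr B(\xi_i,\e_i)$.

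\emph{Harmonicity.} By the definition \eqref{ePu} of $P_\e$, both $P_\e\eta_{0j}-\eta_{0j}$ and $P_\e\eta_j-\eta_j$ are harmonic in $\Om_\e$. Since each $\xi_i$ lies in the removed ball $B(\xi_i,\e_i)$, the functions $G(\cdot,\xi_i)$ and $H(\cdot,\xi_j)$ are harmonic in $\Om_\e$. Hence $f$ and $\tilde f$ are harmonic in $\Om_\e$, and it suffices to estimate them on $\fr\Om_\e=\fr\Om\cup\bigcup_{i=1}^m\fr B(\xi_i,\e_i)$.

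\emph{Estimate of $f$.} On $\fr\Om$ we have $P_\e\eta_{0j}=0$ and $G(\cdot,\xi_i)=0$, so $f=-\eta_{0j}$; since $|x-\xi_j|$ is bounded away from $0$ there, a direct expansion gives $\eta_{0j}=O(\de_j^{\al_j})=O(\rho)$. On $\fr B(\xi_i,\e_i)$ we combine the expansions
\[
G(x,\xi_k)=\Big(-\tfrac1{2\pi}\log\e_i+H(\xi_i,\xi_i)\Big)\de_{ik}+(1-\de_{ik})G(\xi_i,\xi_k)+O(\e_i),
\]
with the explicit value $\eta_{0j}(x)=-2\de_{ij}+O(\rho^{\tau})$ (using $\e_j^{\al_j}/\de_j^{\al_j}=O(\rho^{\tau})$ for some $\tau>0$ from \eqref{choice1}). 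The defining relation \eqref{gamaij} then yields $\sum_k\gamma_{kj}G(x,\xi_k)=2\de_{ij}+O(\e_i)$, and the sum cancels $\eta_{0j}$ up to $O(\rho^{\tilde\sigma})$. Thus $|f|\le C\rho^{\tilde\sigma}$ on $\fr\Om_\e$, and the maximum principle gives the first estimate.

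\emph{Estimate of $\tilde f$.} The strategy is identical but the boundary cancellation is finer because of the logarithmic terms inside $\eta_j$. On $\fr\Om$ one expands
\[
\eta_j(x)=-\tfrac{4}{3}\al_j\log|x-\xi_j|+O(\rho),
\]
and uses $H(x,\xi_j)=\frac{1}{2\pi}\log|x-\xi_j|$ on $\fr\Om$ to obtain the cancellation $-\eta_j-\frac{8\pi}{3}\al_j H(\cdot,\xi_j)=O(\rho)$. On $\fr B(\xi_j,\e_j)$ the values $\log(\de_j^{\al_j}+|x-\xi_j|^{\al_j})=\al_j\log\de_j+O(\rho^\tau)$ and $\frac{\de_j^{\al_j}\pm|x-\xi_j|^{\al_j}}{\de_j^{\al_j}+|x-\xi_j|^{\al_j}}=\pm 1+O(\rho^\tau)$ give $\eta_j=\tfrac{4}{3}\al_j\log\de_j+\tfrac{8}{3}+O(\rho^\tau)$; combined with $H(\cdot,\xi_j)=H(\xi_j,\xi_j)+O(\e_j)$ and the diagonal line of \eqref{gamatij}, everything cancels. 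On $\fr B(\xi_i,\e_i)$ with $i\neq j$, one expands $\eta_j\approx-\tfrac{4}{3}\al_j\log|\xi_i-\xi_j|$ and uses $\frac{8\pi}{3}\al_j G(\xi_i,\xi_j)=-\tfrac{4}{3}\al_j\log|\xi_i-\xi_j|+\tfrac{8\pi}{3}\al_j H(\xi_i,\xi_j)$ together with the off-diagonal line of \eqref{gamatij} to obtain the cancellation modulo $O(\rho^{\tilde\sigma})$. The maximum principle closes the argument.

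The only nontrivial step is ensuring that each logarithmic contribution coming from the singular part of $G$, the harmonic part $H$, and the explicit $\log(\de_j^{\al_j}+|x-\xi_j|^{\al_j})$ inside $\eta_j$ all cancel simultaneously on $\fr B(\xi_j,\e_j)$; this is exactly what the right-hand side of \eqref{gamatij} was engineered to achieve, so the check reduces to careful bookkeeping. The rates of the errors $O(\de_j^{\al_j})$, $O(\e_i)$ and $O((\e_j/\de_j)^{\al_j})$ are all polynomial powers of $\rho$ by \eqref{choice1}, which produces a common exponent $\tilde\sigma>0$.
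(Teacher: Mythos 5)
Your proof is correct and follows exactly the approach the paper indicates (study the harmonic differences $f$ and $\tilde f$, estimate their boundary values on $\fr\Om$ and on each $\fr B(\xi_i,\e_i)$ via the defining linear systems \eqref{gamaij}, \eqref{gamatij}, and invoke the maximum principle), which is the same scheme used to prove Lemma \ref{ewfxi} and worked out in detail in \cite{EFP}. The bookkeeping of error rates is right: $\de_j^{\al_j}=O(\rho)$, $\e_i=O(\rho^{2/(\al_i-2)})$, and $\e_j^{\al_j}/\de_j^{\al_j}=O(\rho^{(\al_j+2)/(\al_j-2)})$, and the $O(\rho^\tau|\log\rho|)$ losses from multiplying the $\log\de_j$ factor can be absorbed into $\rho^{\tilde\sigma}$ for a slightly smaller exponent.
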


\begin{claim}\label{claim3}
There hold that $a_{j}=0$ for all $j=1,\dots,m$.
\end{claim}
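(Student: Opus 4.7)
The plan is to test the equation $L(\phi_n)=h_n$ against the projected function $P_\e\eta_j$, whose Laplacian is tailored to reproduce the dilation kernel $Z_{0j}$ of the limit operator $L_j$. Since $P_\e\eta_j\in H_0^1(\Om_\e)$ and $\Delta P_\e\eta_j=\Delta\eta_j$ in $\Om_\e$, integration by parts gives
$$
\int_{\Om_\e}\phi_n\,\Delta\eta_j+\int_{\Om_\e}W\,\phi_n\,P_\e\eta_j=\int_{\Om_\e}h_n\,P_\e\eta_j.
$$
Setting $K(x):=|x-\xi_j|^{\al_j-2}e^{w_j}$ and using the identity $\Delta\eta_j=K(Z_{0j}-\eta_j)$ together with the expansion of $P_\e\eta_j$ from the preceding lemma, the left-hand side rearranges to
$$
\int_{\Om_\e}\phi_n\,K\,Z_{0j}+\int_{\Om_\e}\phi_n\,(W-K)\,\eta_j+\int_{\Om_\e}W\,\phi_n\,\Big[\tfrac{8\pi}{3}\al_jH(\cdot,\xi_j)-\sum_i\tilde\gamma_{ij}G(\cdot,\xi_i)\Big]+O(\rho^{\tilde\sigma}).
$$

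The first integral is the dominant contribution. Rescaling $x=\xi_j+\de_jy$ turns it into $\int_{\Om_{j,n}}\Phi_{j,n}(y)\frac{2\al_j^2|y|^{\al_j-2}}{(1+|y|^{\al_j})^2}Y_{0j}(y)\,dy$ and, by the strong $L_{\al_j}$-convergence of $\Phi_{j,n}$ to $a_jY_{0j}$ given in Claim \ref{claim2}, converges to $c_j\,a_j$ where
$$c_j=\int_{\R^2}\frac{2\al_j^2|y|^{\al_j-2}}{(1+|y|^{\al_j})^2}Y_{0j}^2(y)\,dy>0$$
is a strictly positive constant computed explicitly via the substitution $u=|y|^{\al_j}$. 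On the right-hand side, the bounds $\tilde\gamma_{ij}=O(1)$ and $|G(\cdot,\xi_i)|=O(|\log\rho|)$ on $\Om_\e$ imply $\|P_\e\eta_j\|_\infty=O(|\log\rho|)$; H\"older's inequality combined with the hypothesis $|\log\rho_n|\|h_n\|_p=o(1)$ then yields $\int h_n P_\e\eta_j=o(1)$.

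It remains to control the self-interaction $\int\phi_n(W-K)\eta_j$ and the cross-interactions. The self-interaction is handled through the pointwise expansion $W=K(1+O(\rho^\sigma+|x-\xi_j|))$ near $\xi_j$ (from \eqref{1517}--\eqref{1518} and the choice \eqref{choice3}), together with the decay $\eta_j=O(\de_j^{\al_j}|x-\xi_j|^{-\al_j})$ away from $\xi_j$, both of which reduce the integral to $o(1)$. The main obstacle is the cross-interactions: after rescaling around each bubble point $\xi_k$ the smooth parts of the test function produce contributions proportional to $a_k\int_{\R^2}\frac{|y|^{\al_k-2}}{(1+|y|^{\al_k})^2}Y_{0k}(y)\,dy$, which vanishes by a direct polar-coordinate computation, while the logarithmic singularity of $G(\cdot,\xi_i)$ at $\xi_i$ generates an $a_i\log\de_i$ term that is precisely absorbed by the diagonally-dominant definition \eqref{gamatij} of the coefficients $\tilde\gamma_{ij}$. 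Collecting all these estimates gives $c_j\,a_j+o(1)=o(1)$, which forces $a_j=0$ for every $j\in\{1,\dots,m\}$.
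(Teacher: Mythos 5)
Your overall plan — test $L(\phi_n)=h_n$ against a projected correction function and pass to the limit in the rescaled variable — is the same strategy as the paper, but the test function you choose is $P_\e\eta_j$, whereas the paper uses $P_\e Z_j$ with $Z_j=\eta_j+\gamma_j^*\eta_{0j}$ for a specifically tuned constant $\gamma_j^*$. That extra piece is not decorative; it is what makes the argument close. In the rescaled picture near $\xi_j$ the smooth part of $P_\e\eta_j$ produces an additive constant of size $O(|\log\rho|)$ (coming from $\frac{\ti\gamma_{jj}}{2\pi}\log\de_j$ and from $\eta_j(\xi_j+\de_j y)\simeq \frac{4}{3}\al_j(\log\de_j)Y_{0j}+\dots$), and this constant multiplies $\int \frac{2\al_j^2|y|^{\al_j-2}}{(1+|y|^{\al_j})^2}\Phi_{j,n}\,dy$. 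That integral tends to $a_j\int\frac{2\al_j^2|y|^{\al_j-2}}{(1+|y|^{\al_j})^2}Y_{0j}\,dy=0$, but Claim~\ref{claim2} gives no \emph{rate} for the $L_{\al_j}$-convergence, so all you can conclude is $O(|\log\rho|)\cdot o(1)$, which is not $o(1)$. The role of $\gamma_j^*$ in the paper, via \eqref{gamajs}, is precisely to subtract this dangerous constant exactly, so that after rescaling $P_\e Z_j-Z_j-\gamma_j^*$ has no $O(|\log\rho|)$ additive constant, only an $O(1)$ multiple of $\log|y|$ plus small remainders. Without that cancellation your estimate does not hold.

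A second, independent gap: you claim the dominant contribution is $c_j a_j+o(1)=o(1)$ with $c_j=\int\frac{2\al_j^2|y|^{\al_j-2}}{(1+|y|^{\al_j})^2}Y_{0j}^2\,dy>0$, but the rescaled smooth part also carries a genuine $\log|y|$ term with coefficient of order $O(1)$ (in the paper, $\frac{1}{2\pi}(\ti\gamma_{jj}-\gamma_j^*\gamma_{jj})\to-\frac{\al_j(\al_j-2)}{3}$). This produces a \emph{finite, nonvanishing} contribution $d_j a_j$ with $d_j\ne 0$, and the identity you actually obtain is of the form $(c_j+d_j)a_j=o(1)$. The conclusion $a_j=0$ then relies on checking $c_j+d_j\ne 0$, which the paper does by computing $\int\frac{2\al_j^2|y|^{\al_j-2}}{(1+|y|^{\al_j})^2}Y_{0j}^2\,dy=\frac{4\pi}{3}\al_j$ and $\int\frac{2\al_j^2|y|^{\al_j-2}}{(1+|y|^{\al_j})^2}Y_{0j}\log|y|\,dy=-4\pi$, yielding the positive prefactor $\frac{4\pi\al_j}{3}(\al_j-1)$. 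Your proposal drops the $\log|y|$ term entirely. Finally, a smaller slip: the decay $\eta_j=O(\de_j^{\al_j}|x-\xi_j|^{-\al_j})$ you invoke for the self-interaction is that of $\eta_{0j}$; $\eta_j$ itself grows like $-\frac{4\al_j}{3}\log|x-\xi_j|$ away from $\xi_j$, so the self-interaction estimate needs a different justification (it does work out to $o(1)$, but not for the reason you gave).
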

\begin{proof}[\dem] To this aim let us construct suitable tests functions and from the assumption on $h_n$, $|\log \rho_n|\ \|h_n\|_*=o(1)$, we get the additional relation
\begin{equation}\label{aj0}
a_j\int_{\R^2} {2\al_j^2|y|^{\al_j-2}\over (1+|y|^{\al_j})^2}Y_{0j}^2-{\al_j(\al_j-2)\over 3} \	 a_j\int_{\R^2 }  {2\al_j^2|y|^{\al_j-2}\over (1+|y|^{\al_j})^2 }Y_{0j} \log |y| \, dy=0,
\end{equation}
which implies $a_j=0$ as claimed, since
$$\int_{\R^2} {2\al_j^2|y|^{\al_j-2}\over (1+|y|^{\al_j})^2}Y_{0j}^2=\int_{\R^2} {2\al_j^2|y|^{\al_j-2}\over (1+|y|^{\al_j})^2}\({1-|y|^{\al_j} \over 1+|y|^{\al_j} }\)^2dy={4\pi\over 3}\al_j$$
and 
$$\int_{\R^2} {2\al_j^2|y|^{\al_j-2}\over (1+|y|^{\al_j})^2}Y_{0j}\log|y|=\int_{\R^2} {2\al_j^2|y|^{\al_j-2}\over (1+|y|^{\al_j})^2}\ {1-|y|^{\al_j} \over 1+|y|^{\al_j} }\ \log|y|\, dy= -4\pi .$$

Next, as in \cite[Claim 3, Section 4]{EFP}, we define the following test function $P_\e Z_j$, where $Z_j=\eta_j+\gamma_j^*\eta_{0j}$ and $\gamma_j^*$ is given by
$$\gamma_j^*=\dfrac{\ds \dfrac{\ti\gamma_{jj}}{2\pi}\log\de_j +\(\dfrac{8\pi}{3}\al_j-\ti\gamma_{jj}\) H(\xi_j,\xi_j) -  \sum_{i=1,i\ne j} ^m \ti\gamma_{ij} G(\xi_i,\xi_j) }{\ds 1-\gamma_{jj}H(\xi_j,\xi_j)- \sum_{i=1,i\ne j} ^m \gamma_{ij} G(\xi_i,\xi_j) + \dfrac{\gamma_{jj}}{2\pi}\log\de_j},$$
so that,
\begin{equation}\label{gamajs}
\gamma_j^*= \(\dfrac{8\pi}{3}\al_j-\ti\gamma_{jj}+ \gamma_{jj}\gamma_j^*\) H(\xi_j,\xi_j) -  \sum_{i=1,i\ne j} ^m (\ti\gamma_{ij} - \gamma_j^* \gamma_{ij}) G(\xi_i,\xi_j) +\dfrac{1}{2\pi}(\ti\gamma_{jj}- \gamma_{j}^* \gamma_{jj})\log\de_j .
\end{equation}
From the \eqref{choice1} and the expansions for $\gamma_{jj}$ and $\ti\gamma_{jj}$ we obtain that
$$\gamma_j^*=\dfrac{\ds \dfrac{\ti\gamma_{jj}}{2\pi}\log\de_j+O(1)}{\ds1+\dfrac{ \gamma_{jj}}{2\pi}\log\de_j + O\Big({1\over |\log\rho|} \Big)} = - {\al_j-2\over 3}\log \rho +O(1).$$
Notice that $P_\e Z_j$ expands as
\begin{align}\label{pzj}
P_\e Z_j
=&\,Z_{j}+{8\pi\over 3}\al_j H(x,\xi_j)- \ti\gamma_{jj}\(-{1\over 2\pi}\log|x - \xi_i| + H(x,\xi_j)\) -\sum_{i=1,i\ne j}^m \ti\gamma_{ij} G(x,\xi_i) \nonumber\\
&+O(\rho^{\ti\sigma}) + \gamma_j^*\lf[\gamma_{jj}\(-{1\over 2\pi}\log|x - \xi_i| + H(x,\xi_j)\) + \sum_{i=1,i\ne j}^m \gamma_{ij} G(x,\xi_i)+O(\rho^{\ti\sigma})\rg] \nonumber\\
=&\,Z_{j}+\( {8\pi\over 3} \al_j - \ti\gamma_{jj} + \gamma_{jj}\gamma_j^*\) H(x,\xi_j) -\sum_{i=1,i\ne j}^m \(\ti\gamma_{ij} -  \gamma_{ij}\gamma_j^* \)G(x,\xi_i)\\
& + {1\over 2\pi}\(\ti\gamma_{jj}-\gamma_j^*\gamma_{jj}\) \log|x - \xi_j| +O(\rho^{\ti\sigma}) + \gamma_{j}^* O(\rho^{\ti\sigma})\nonumber
\end{align}
Hence, multiplying equation \eqref{eqphin} by $P_\e Z_j$ and integrating by parts we obtain that
\begin{equation*}
\begin{split}
\int_{\Om_\e} hPZ_{j}=&\ \int_{\Om_\e}\lap Z_{j}\phi + \int_{\Om_\e} W\phi  P_\e Z_{j}  ,
\end{split}
\end{equation*}
in view of $P_\e Z_{j}=0$ on $\fr\Om_{\e_n}$ and $\ds \int_{\Om_\e}\lap\phi P_\e Z_{j}=\int_{\Om_\e}\phi\; \lap P_\e Z_{j}$. Furthermore, we have that
\begin{equation*}
\begin{split}
\int_{\Om_\e} hPZ_{j} 
=&\,\int_{\Om_\e} \phi\lf[|x-\xi_j|^{\al_j-2}e^{w_j}Z_{0j} - |x-\xi_j|^{\al_j-2} e^{w_j}\eta_j+\gamma_j^*\( - |x-\xi_j|^{\al_j-2}e^{w_j}  - |x-\xi_j|^{\al_j-2} e^{w_j}\eta_{0j}\) \rg]\\
& +\int_{\Om_\e} W\phi  P_\e Z_{j}\\
=&\,\int_{\Om_\e} \phi |x-\xi_j|^{\al_j-2}e^{w_j}Z_{0j} +\int_{\Om_\e} |x-\xi_j|^{\al_j-2} e^{w_j} \( PZ_j-Z_j - \gamma_j^*\) \\
& +\int_{\Om_\e} \(W - |x-\xi_j|^{\al_j-2}e^{w_j}\)  P_\e Z_{j} \phi  
\end{split}
\end{equation*}
Now, estimating every integral term we find that
$$\int_{\Om_\e} hPZ_{j}=O\lf(\, |\log\rho|\,\|h\|_p\rg)=o\lf( 1 \rg),$$
in view of $PZ_{j}=O(|\log\rho|)$ and $G(x,\xi_k)=O(|\log\e_k|)$. Next, scaling we obtain that
$$\int_{\Om_\e} \phi |x-\xi_j|^{\al_j-2}e^{w_j}Z_{0j}=\int_{\Om_{j,n } }  {2\al_j^2|y|^{\al_j-2}\over (1+|y|^{\al_j})^2 }\Phi_{j,n}Y_{0j}\, dy =a_j\int_{\R^2 }  {2\al_j^2|y|^{\al_j-2}\over (1+|y|^{\al_j})^2 }Y_{0j}^2 \, dy +o(1)$$
Also, by using \eqref{gamajs} and \eqref{pzj} we get that
\begin{equation*}
\begin{split}
 \int_{\Om_\e} |x-\xi_j|^{\al_j-2} e^{w_j}  \( PZ_j-Z_j - \gamma_j^*\) = &\,
\int_{\Om_\e} |x-\xi_j|^{\al_j-2} e^{w_j} O(\e^{\ti\sigma}) +\int_{\Om_{j,n } }  {2\al_j^2|y|^{\al_j-2}\over (1+|y|^{\al_j})^2 }\Phi_{j,n} O(\de_j|y|)\, dy \\
&\, + {1\over 2\pi} \(\ti\gamma_{jj}-\gamma_j^*\gamma_{jj}\)\int_{\Om_{j,n } }  {2\al_j^2|y|^{\al_j-2}\over (1+|y|^{\al_j})^2 }\Phi_{j,n} \log |y| \, dy\\
=&\,-{\al_j(\al_j-2)\over 3} \	 a_j\int_{\R^2 }  {2\al_j^2|y|^{\al_j-2}\over (1+|y|^{\al_j})^2 }Y_{0j} \log |y| \, dy + o(1),
\end{split}
\end{equation*}
in view of $\ds {1\over 2\pi} \(\ti\gamma_{jj}-\gamma_j^*\gamma_{jj}\)=-{\al_j(\al_j-2)\over 3} + O\Big({1\over |\log\rho|} \Big)$. Furthermore, we have that
\begin{equation*}
\begin{split}
\int_{\Om_\e} \(W - |x-\xi_j|^{\al_j-2}e^{w_j}\)  P_\e Z_{j} \phi&=\int_{B_j-\xi_j\over\de_j } {2\al_j^2|y|^{\al_j-2}\over (1+|y|^{\al_j})^2}O(\de_j|y|+ \rho^{\ti\sigma} ) \Phi_{j,n} P_\e Z_j(\xi_j+\de_j y)\, dy\\
&\quad +\sum_{i=1, i\ne j}^m \int_{B_i-\xi_i\over\de_i } {2\al_i^2|y|^{\al_i-2}\over (1+|y|^{\al_i})^2}[1+O(\de_i|y|+ \rho^{\ti\sigma} )] \Phi_{i,n} P_\e Z_j(\xi_i+\de_i y)\, dy \\
&\quad +O(\rho^{\ti\sigma} |\log\rho|)\\
=&\, O\(\rho^{\ti \sigma} |\log\rho| \int_{B_i-\xi_i\over\de_i} {2\al_j^2|y|^{\al_j-2}\over (1+|y|^{\al_j})^2}( |y|+ 1 )|\Phi_{i,n}|\,dy \)+ o(1)\\
=&\, o(1),
\end{split}
\end{equation*}
since from \eqref{pzj}, $P_\e Z_j(\xi_j+\de_j y)=O(|\log\rho|),$ and for $i\ne j$ and $y\in \de_i^{-1}(B_i-\xi_i)$ it holds
\begin{equation*}
\begin{split}
P_\e Z_j(\xi_i+\de_i y) 
=&\, {8\pi\over 3}\al_j G(\xi_i,\xi_j ) +O(\de_j^{\al_j} + \de_i|y|) - \(\ti\gamma_{ij} -  \gamma_{ij}\gamma_j^* \)\(-{1\over 2\pi} \log|\de_iy| + H(\xi_i+\de_i y,\xi_i)\) \\
&-\sum_{k=1,k\ne i}^m \(\ti\gamma_{kj} -  \gamma_{kj}\gamma_j^* \)\( G(\xi_i,\xi_k) + O(\de_i |y|)\) \\ 
=&\, {1\over 2\pi}\(\ti\gamma_{ij} -  \gamma_{ij}\gamma_j^* \)  \log|\de_iy|  + {8\pi\over 3}\al_j G(\xi_i,\xi_j ) - \(\ti\gamma_{ij} -  \gamma_{ij}\gamma_j^* \) H(\xi_i,\xi_i) \\
&- \sum_{k=1,k\ne i}^m \(\ti\gamma_{kj} -  \gamma_{kj}\gamma_j^* \)G(\xi_i,\xi_k) + O(\de_i |y|)
\end{split}
\end{equation*}
and
\begin{equation*}
\begin{split}
\int_{B_i-\xi_i\over\de_i } &{2\al_i^2|y|^{\al_i-2}\over (1+|y|^{\al_i})^2}[1+O(\de_i|y|+ \rho^{\ti\sigma} )] \Phi_{i,n} P_\e Z_j(\xi_i+\de_i y)\, dy\\
=&\,{1\over 2\pi}  \(\ti\gamma_{ij} -  \gamma_{ij}\gamma_j^* \)\log\de_i  \int_{B_i-\xi_i\over\de_i } {2\al_i^2|y|^{\al_i-2}\over (1+|y|^{\al_i})^2} \Phi_{i,n}  \, dy \\
&\,+ {1\over 2\pi}  \(\ti\gamma_{ij} -  \gamma_{ij}\gamma_j^* \)  \int_{B_i-\xi_i\over\de_i } {2\al_i^2|y|^{\al_i-2}\over (1+|y|^{\al_i})^2}\Phi_{i,n}   \log| y| \, dy \\
&\, - \(\text{bounded constant} \) \int_{B_i-\xi_i\over\de_i } {2\al_i^2|y|^{\al_i-2}\over (1+|y|^{\al_i})^2}[1+O(\de_i|y|+ \rho^{\ti\sigma} )] \Phi_{i,n}\, dy \\
&\, + O\( \de_i \int_{B_i-\xi_i\over\de_i } {2\al_i^2|y|^{\al_i-2}\over (1+|y|^{\al_i})^2}[|y|+\de_i|y|^2]\, dy\) =o(1)\\
\end{split}
\end{equation*}
Therefore, we conclude \eqref{aj0} and hence, $a_j=0$ for all $j=1,\dots,m$.
\end{proof}

Now, by using Claims \ref{claim2} and  \ref{claim3} we deduce that $\Psi_{j,n}$ converges to zero weakly in $H_{\al_j}(\R^2)$ and strongly in $L_{\al_j}(\R^2)$ as $n\to+\infty$. Thus, we arrives at a contradiction with \eqref{npsi} and it follows the priori estimate $\|\phi\|\le C|\log \rho | \, \|h\|_p$. It only remains to prove the solvability assertion. To this purpose consider the space $H=H_0^1(\Om_\e)$ endowed with
the usual inner product $\ds [\phi,\psi]=\int_{\Om_\e} \grad\phi\grad\psi.$
Problem \eqref{pl} expressed in weak form is equivalent to that
of finding a $\phi\in H$ such that
$$[\phi,\psi]=\int_{\Om_\e} \lf[ W\phi-h\rg]\psi,\qquad\text{for all }\psi\in H.$$ With the aid of Riesz's representation theorem, this equation gets rewritten in
$H$ in the operator form $\phi=\ml{K}(\phi)+\ti h$, for certain
$\ti h\in H$, where $\ml{K}$ is a compact operator in $H$.
Fredholm's alternative guarantees unique solvability of this
problem for any $h$ provided that the homogeneous equation
$\phi=\ml{K}(\phi)$ has only the zero solution in $H$. This last
equation is equivalent to \eqref{pl} with $h\equiv 0$. Thus,
existence of a unique solution follows from the a priori estimate
\eqref{estphi}. This finishes the proof.
\end{proof}

\bigskip

\begin{center}
\textsc{Acknowledgements}
\end{center}
\noindent The author would like to thank Prof. Pistoia (U. Roma ``La Sapienza", Italia) and Prof. Esposito (U. Roma Tre, Italia) for many interesting discussions about these type of problems. The author has been supported by grant Fondecyt Regular Nº 1201884, Chile.


\begin{thebibliography}{99}

\bibitem{BL} D. Bartolucci, C.S. Lin, \emph{Existence and uniqueness for mean field equations on multiply connected domains at the critical parameter}, Math. Ann. {\bf 359} (2014), 1--44. 

\bibitem{BaPi}D. Bartolucci, A. Pistoia, {\it Existence and qualitative properties of concentrating solutions for the sinh-Poisson equation}, IMA J. Appl. Math. {\bf 72} (2007), no. 6, 706-729.

\bibitem{BaPiWe}T. Bartsch, A. Pistoia, T. Weth, {\it N -vortex equilibria for ideal fluids in bounded planar domains and new nodal solutions of the sinh-Poisson and the Lane-Emden-Fowler equations}. Comm. Math. Phys. {\bf 297} (2010), no. 3, 653-686.

\bibitem{bjmr} L. Battaglia, A. Jevnikar, A. Malchiodi, D. Ruiz, \emph{A general existence result for the Toda system on compact surfaces}, Adv. Math. 285 (2015), 937-979

\bibitem{clmp} E. Caglioti, P.L. Lions, C. Marchioro, M. Pulvirenti, \emph{A special class of stationary flows for two-dimensional Euler equations: a statistical mechanics description}, Comm. Math. Phys. {\bf 143} (1992), 501--525

\bibitem{clmp1} E. Caglioti, P.L. Lions, C. Marchioro, M. Pulvirenti, \emph{A special class of stationery flows for two-dimensional Euler equations: A statistical mechanics description, part II}, Comm. Math. Phys. {\bf 174} (1995), 229--260.

\bibitem{CL1} C.C. Chen, C.S. Lin, \emph{Sharp estimates for solutions of multi-bubbles in compact Riemann surfaces}, Comm. Pure Appl. Math. {\bf 55} (2002), 728--771.

\bibitem{CL2} C.C. Chen, C.S. Lin, \emph{Topological Degree for a mean field equation on Riemann surface}, Comm. Pure Appl. Math. {\bf 56} (2003), 1667--1727.

\bibitem{DEM5} M. del Pino, P. Esposito, M. Musso, \emph{Linearized theory for entire solutions of a singular Liouvillle equation}, Proc. Amer. Math. Soc. {\bf 140} (2012), no. 2, 581--588.

\bibitem{DEFM} M. del Pino, P. Esposito, P. Figueroa, M. Musso, \emph{Non-topological condensates for the self-dual Chern-Simons-Higgs model}, Comm. Pure Appl. Math. {\bf 68} (2015), 1191--1283.

\bibitem{dkm} M. del Pino, M. Kowalczyk, M. Musso, {\it Singular limits in Liouville-type equations}. Calc. Var. Partial Differential Equations {\bf 24} (2005), no. 1,  47--81.

\bibitem{Dja} Z. Djadli, \emph{Existence result for the mean field problem on Riemann surfaces of all genuses}, Commun. Contemp. Math. {\bf 10} (2008), 205-220.

\bibitem{EsFi} P. Esposito, P. Figueroa, \emph{Singular mean field equations on compact Riemann surfaces}, Nonlinear Analysis {\bf 111} (2014), 33--65.

\bibitem{EFP} P. Esposito, P. Figueroa, A. Pistoia \emph{On the mean field equation   with  variable intensities on pierced domains}, Nonlinear Analysis {\bf 190} (2020) 111597, https://doi.org/10.1016/j.na.2019.111597.

\bibitem{EGP} P. Esposito, M. Grossi, A. Pistoia, {\it On the existence of blowing-up solutions for a mean field equation}. Ann. IHP Analyse Non Lin{\'e}aire {\bf 22} (2005), no. 2 , 227--257.

\bibitem{Fi} P. Figueroa, {\it Singular limits for Liouville-type equations on the flat two-torus}, Calc. Var. Partial Differential Equations {\bf 49} (2014), no. 1-2, 613-647.

\bibitem{GP} M. Grossi, A. Pistoia, \emph{Multiple Blow-Up Phenomena for the Sinh-Poisson Equation}, Arch. Rational Mech. Anal. {\bf 209} (2013) 287--320.

\bibitem{j} A. Jevnikar, \emph{An existence result for the mean field equation on compact surfaces in a doubly supercritical regime}, Proc. Roy. Soc. Edinburgh Sect A {\bf 143} (2013), 1021--1045.

\bibitem{j2} A. Jevnikar, \emph{Blow-up analysis and existence results in the supercritical case for an asymmetric mean field equation with variable intensities}, J. Diff. Eq. 263 (2017), no. 2, 972-1008

\bibitem{jwy1} A. Jevnikar, J. Wei, W. Yang, \emph{Classification of blow-up limits for the sinh-Gordon equation}, Differential Integral Equations {\bf 31} (2018), 657--684.

\bibitem{jwy2} A. Jevnikar, J. Wei, W. Yang, \emph{On the topological degree of the mean field equation with two parameters}, Indiana Univ. Math. J. {\bf 67} (2018), no. 1, 29--88.

\bibitem{jwyz} J. Jost, G. Wang, D. Ye, C. Zhou, \emph{The blow up of solutions of the elliptic sinh-Gordon equation}, Calc. Var. Partial Differential Equations {\bf 31} (2008) no. 2, 263--276.

\bibitem{l} C.S. Lin, \emph{An expository survey on recent development of mean field equations}, Discr. Cont. Dynamical Systems {\bf 19} (2007), 217--247.

\bibitem{Mal} A. Malchiodi, \emph{Morse theory and a scalar field equation on compact surfaces}, Adv. Differential Equations {\bf 13} (2008), 1109--1129.

\bibitem{N} C. Neri, \emph{Statistical mechanics of the N-point vortex system with random intensities on a bounded domain}, Ann. Inst. H. Poincar\'e Anal. Non Lin\'eaire {\bf 21} (2004), no. 3, 381?399.

\bibitem{os1} H. Ohtsuka, T. Suzuki, \emph{Mean field equation for the equilibrium turbulence and a related functional inequality}, Adv. Differential Equations {\bf 11} (2006), 281--304.

\bibitem{o} L. Onsager, \emph{Statistical hydrodynamics}. Nuovo Cimento (9) {\bf 6} (1949), 279--287.

\bibitem{op} M. Ould Ahmedou, A. Pistoia, \emph{On the supercritical mean field equation on pierced domains}, Proc. Amer. Math. Soc. {\bf 143} (2015), no. 9, 3969--3984.

\bibitem{pr1} A. Pistoia, T. Ricciardi, \emph{Concentrating solutions for a Liouville type equation with variable intensities in 2D-turbulence}, Nonlinearity {\bf 29} (2016), no. 2, 271--297.

\bibitem{pr2} A. Pistoia, T. Ricciardi, \emph{Sign-changing tower of bubbles for a sinh-Poisson equation with asymmetric exponents}, Discrete Contin. Dyn. Syst. {\bf 37} (2017), 5651--5692.

\bibitem{r} T. Ricciardi, \emph{Mountain pass solutions for a mean field equation from two-dimensional turbulence}, Diff. Int. Eqs. {\bf 20} (2007), 561--575.

\bibitem{rt} T. Ricciardi, R. Takahashi, \emph{Blow-up behavior for a degenerate elliptic sinh-Poisson equation with variable intensities}, Calc. Var. Partial Differential Equations {\bf 55} (2016), Paper No. 152, 25 pp.

\bibitem{rtzz}T. Ricciardi, R. Takahashi, G. Zecca, X. Zhang, \emph{On the existence and blow-up of solutions for a mean field equation with variable intensities}, Atti Accad. Naz. Lincei Rend. Lincei Mat. Appl. {\bf 27} (2016), 413--429.

\bibitem{rz2012} T. Ricciardi, G. Zecca, \emph{Blow-up analysis for some mean field equations involving probability measures from statistical hydrodynamics}, Differential and Integral Equa- tions 25 n. 3?4 (2012), 201?222.

\bibitem{rz2016} T. Ricciardi, G. Zecca, \emph{Mass quantization and minmax solutions for Neri's mean field equation in 2D-turbulence}, J. Diff. Eq. {\bf 260} (1), (2016), 339-369.

\bibitem{rz} T. Ricciardi, G. Zecca, \emph{Minimal blow-up masses and existence of solutions for an asymmetric sinh- Poisson equation}, Math. Nachr. {\bf 290} (2017), no. 14-15, 2375-2387

\bibitem{ss} K. Sawada, T. Suzuki, \emph{Derivation of the equilibrium mean field equations of point vortex and vortex filament system}, Theoret. Appl. Mech. Japan {\bf 56} (2008), 285--290.

\end{thebibliography}
\end{document}